\renewcommand{\subsection}{\@startsection{subsection}{1}{0pt}{-3.25ex plus -1ex minus-.2ex}{1.5ex plus.2ex}{\normalfont\it}}
\renewcommand{\section}{\@startsection{section}{1}{\parindent}{3.5ex plus 1ex minus .2ex}{2.3ex plus.2ex}{\sc}}
\renewcommand{\phi}{\varphi}
\renewcommand{\geq}{\geqslant}
\renewcommand{\epsilon}{\varepsilon}
\renewcommand{\kappa}{\varkappa}
\DeclareMathOperator{\spec}{Spec}
  \DeclareMathOperator{\Cyl}{Cyl}
\DeclareMathOperator{\hocolim}{hocolim}
\DeclareMathOperator{\chr}{char} 
 \DeclareMathOperator{\mot}{mot}
\DeclareMathOperator{\tw}{tw} 
\DeclareMathOperator{\Hom}{Hom} 
 \DeclareMathOperator{\id}{id}
\DeclareMathOperator{\corr}{Cor} 
\DeclareMathOperator{\Mor}{Mor} 
\DeclareMathOperator{\colim}{colim} \DeclareMathOperator{\Ho}{Ho}
 \DeclareMathOperator{\Ab}{Ab}
\DeclareMathOperator{\Ch}{Ch} 
\DeclareMathOperator{\coker}{Coker} \DeclareMathOperator{\nis}{nis}
 \DeclareMathOperator{\Mod}{Mod}
\newcommand{\lra}[1]{\bl{#1}\longrightarrow\relax}
\newcommand{\bl}[1]{\buildrel #1\over}
\newcommand{\cc}{\mathcal}
\newcommand{\bb}{\mathbb}
\newcommand{\op}{{\textrm{\rm op}}}
\newcommand{\wt}{\widetilde}
\newcommand{\corrt}{\wt{\corr}}
\newtheorem{thm}{Theorem}[section]
\newtheorem{prop}[thm]{Proposition}
\newtheorem{cor}[thm]{Corollary}
\newtheorem{lem}[thm]{Lemma}
\newtheorem{rem}[thm]{Remark}
\newtheorem*{thmm}{Theorem}
\newtheorem{defs}[thm]{Definition}
\begin{document}

\footskip30pt


\title{Reconstructing rational stable motivic homotopy theory}
\author{Grigory Garkusha}
\address{Department of Mathematics, Swansea University, Fabian Way, Swansea SA1 8EN, UK}
\email{g.garkusha@swansea.ac.uk}

\urladdr{http://math.swansea.ac.uk/staff/gg/}

\keywords{Motivic homotopy theory, generalized correspondences, triangulated categories of motives}

\subjclass[2010]{14F42; 14F05}

\begin{abstract}
Using a recent computation of the rational minus part of $SH(k)$ by
Ananyevskiy--Levine--Panin~\cite{ALP}, a theorem of
Cisinski--D\'eglise~\cite{CD} and a version of the
R\"ondigs--{\O}stv{\ae}r~\cite{RO} theorem, rational stable motivic
homotopy theory over an infinite perfect field of characteristic
different from 2 is recovered in this paper from finite Milnor--Witt
correspondences in the sense of Calm\`es--Fasel~\cite{CF}.
\end{abstract}

\maketitle

\thispagestyle{empty} \pagestyle{plain}

\newdir{ >}{{}*!/-6pt/@{>}} 


\section{Introduction}

By the celebrated Serre finiteness theorem~\cite{Serre} the
positive stable homotopy groups of the classical sphere spectrum
with rational
coefficients 
are zero. It implies that the stable homotopy category of
$S^1$-spectra with rational coefficients $SH_{\bb Q}$ is naturally
equivalent to the homotopy category of $H\bb Q$-modules $\Ho(\Mod
H\bb Q)$, where $H\bb Q$ is the Eilenberg--Mac~Lane symmetric
spectrum of $\bb Q$. By the Robinson theorem~\cite{Rob} the homotopy
category of $HA$-modules $\Ho(\Mod HA)$, where $A$ is a ring with
identity, is equivalent to the derived category $D(A)$ of $A$. Thus
$SH_{\bb Q}$ is naturally equivalent to the derived category of
rational vector spaces $D(\bb Q)$.



In the motivic world the role of a ring is played by a ``preadditive
category of correspondences" $\cc A$ whose objects are the smooth
algebraic varieties $Sm/k$ over a field $k$. Using the category
theory terminology, $\cc A$ is a ring with several objects, whose
objects are those of $Sm/k$. In turn, the role of the classical
derived category over a ring is played by the category $DM_{\cc
A}(k)$, which is just an extension of the celebrated Voevodsky
triangulated category~\cite{Voe1} $DM(k)$ to general
correspondences. Since motivic homotopy theory requires the
Nisnevich topology and contractibility of the affine line $\bb A^1$,
we require the relevant properties for $\cc A$ to satisfy (see
Section~\ref{sectcorrs} for details).

The rational stable motivic homotopy theory $SH(k)_{\bb Q}$ splits
in two parts: $SH^+(k)_{\bb Q}$ and $SH^-(k)_{\bb Q}$. The plus part
$SH^+(k)_{\bb Q}$ is equivalent to Voevodsky's $DM(k)_{\bb Q}$ (this
follows from a theorem of Cisinski--D\'eglise~\cite[16.2.13]{CD}).
Ananyevskiy--Levine--Panin~\cite{ALP} have computed $SH^-(k)_{\bb
Q}$ as the category of Witt motives with rational coefficients (see
Bachmann~\cite{Bach} as well). Using these results, we show in
Theorem~\ref{ratsphcw} that the rational motivic sphere spectrum
$\bb S\otimes\bb Q$ is naturally equivalent to the ``additive
motivic sphere spectrum" $\bb S^{MW}\otimes\bb Q$ associated with
the additive category of finite Milnor--Witt correspondences
$\corrt$ in the sense of Calm\`es--Fasel~\cite{CF}.

Next, we extend the R\"ondigs--{\O}stv{\ae}r
theorem~\cite[Theorem~1]{RO} to the triangulated category $DM_{\cc
A}(k)$ (see Theorem~\ref{rondigsostvaer}). {\it This extension is of
independent interest!} For example, it is of great utility to
compare various triangulated categories of motives in~\cite{GG}.
The generalised R\"ondigs--{\O}stv{\ae}r theorem can also be
regarded as a motivic counterpart of the Robinson
theorem~\cite{Rob}.


Theorem~\ref{ratsphcw} computing $\bb S\otimes\bb Q$ together with
generalised R\"ondigs--{\O}stv{\ae}r's Theorem~\ref{rondigsostvaer}
lead to the proof of the main result of the paper which is
formulated as follows (see Theorem~\ref{reconstrshq}).

\begin{thmm}[Reconstruction]
If $k$ is an infinite perfect field of characteristic not 2, then
$SH(k)_{\bb Q}$ is equivalent to the triangulated category of
Milnor--Witt motives with rational coefficients $DM_{MW}(k)_{\bb Q}$
in the sense of~\cite{DF}. The equivalence preserves the
triangulated structure.
\end{thmm}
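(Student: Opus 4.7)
The plan is to assemble the Reconstruction Theorem from the two principal results highlighted in the introduction: Theorem~\ref{ratsphcw}, which identifies the rational motivic sphere spectrum $\bb S\otimes\bb Q$ with the additive Milnor--Witt sphere spectrum $\bb S^{MW}\otimes\bb Q$, together with the generalised R\"ondigs--{\O}stv{\ae}r Theorem~\ref{rondigsostvaer}, which identifies $DM_{\cc A}(k)$ with a homotopy category of modules over a motivic ring spectrum canonically associated to $\cc A$.

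First, I would apply Theorem~\ref{rondigsostvaer} to the additive category of correspondences $\cc A=\corrt$ of finite Milnor--Witt correspondences in the sense of Calm\`es--Fasel. This yields a triangulated equivalence $DM_{MW}(k)\simeq\Ho(\Mod\,\bb S^{MW})$, and after tensoring with $\bb Q$ gives $DM_{MW}(k)_{\bb Q}\simeq\Ho(\Mod\,\bb S^{MW}\otimes\bb Q)$. On the other side, $SH(k)_{\bb Q}\simeq\Ho(\Mod\,\bb S\otimes\bb Q)$ holds essentially tautologically, since every rational motivic spectrum is canonically a module over the rational motivic sphere.

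Next, I would invoke Theorem~\ref{ratsphcw} to obtain the equivalence $\bb S\otimes\bb Q\simeq\bb S^{MW}\otimes\bb Q$ of (commutative) ring spectra. A weak equivalence of cofibrant commutative ring spectra induces a Quillen equivalence between the respective model categories of modules, and therefore a triangulated equivalence $\Ho(\Mod\,\bb S\otimes\bb Q)\simeq\Ho(\Mod\,\bb S^{MW}\otimes\bb Q)$. Stitching the three equivalences together yields $SH(k)_{\bb Q}\simeq DM_{MW}(k)_{\bb Q}$, with the triangulated structure preserved at every step, since Theorem~\ref{rondigsostvaer} is explicitly triangulated and the module-level Quillen equivalence descends to a triangulated equivalence on homotopy categories.

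The hypotheses that $k$ be infinite, perfect, and of characteristic not $2$ enter exclusively through the inputs: the Ananyevskiy--Levine--Panin computation of $SH^-(k)_{\bb Q}$ that feeds into Theorem~\ref{ratsphcw}, and the Calm\`es--Fasel setup of $\corrt$, both of which require these assumptions. The main obstacle, I expect, lies in the bookkeeping of multiplicative structure: one must ensure that the equivalence produced by Theorem~\ref{ratsphcw} is genuinely an equivalence of (rational) ring spectra rather than merely of underlying objects, and that the resulting module-category equivalence is compatible with the two identifications of $SH(k)_{\bb Q}$ and $DM_{MW}(k)_{\bb Q}$ as homotopy categories of modules over the respective rational sphere spectra. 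Once this structured compatibility is in place, the remainder of the assembly is formal.
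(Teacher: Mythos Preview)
Your approach is essentially the paper's. The only substantive difference is that the paper does not treat $SH(k)_{\bb Q}\simeq\Ho(\Mod\,\bb S\otimes\bb Q)$ as tautological: it first passes to $D_{\bb A^1}(k)_{\bb Q}$ via Morel~\cite{Mor0}, then applies Theorem~\ref{rondigsostvaer} a second time with $\cc A=\cc A_{naive}$ and invokes~\cite[4.3]{SS1} (the same Schwede--Shipley reference also dispatches your multiplicative-bookkeeping concern for the step $\Ho(\Mod\,\bb S\otimes\bb Q)\simeq\Ho(\Mod\,\bb S^{MW}\otimes\bb Q)$).
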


One of the approaches to constructing motivic homotopy theory,
pioneered by Voevodsky, is to use various correspondences on smooth
algebraic varieties. This approach has many computational
advantages. Voevodsky constructed~\cite{Voe1} the category of
motives $DM(k)$ by using finite correspondences. Later he developed
the theory of framed correspondences~\cite{Voe2}. One of the aims
was to suggest another framework for Morel--Voevodsky's stable
motivic homotopy theory $SH(k)$. In~\cite{GP3} the author and Panin
use Voevodsky's theory to develop the theory of big framed motives
which converts the classical Morel--Voevodsky stable motivic
homotopy theory into an equivalent local theory of framed bispectra.

One of the central objects of the theory of (big) framed motives in
the sense of~\cite{GP3} is linear framed motives of algebraic
varieties. They are explicitly constructed complexes of Nisnevich
sheaves with framed correspondences $\bb
ZF(-\times\Delta^\bullet,X)$, where $X\in Sm/k$. As an application
of the Reconstruction Theorem we prove the following result
comparing motivic complexes with framed and Milnor--Witt
correspondences respectively (see Theorem~\ref{complexesq}).

\begin{thmm}[Comparison]
Given an infinite perfect field of characteristic not 2 and a
$k$-smooth scheme $X$, each morphism of complexes of Nisnevich
sheaves
   $$f_n:\bb ZF(-\times\Delta^\bullet,X\times\bb G_m^{\wedge n})\otimes\bb Q\to
        \corrt(-\times\Delta^\bullet,X\times\bb G_m^{\wedge n})_{\nis}\otimes\bb Q,\quad n\geq 0,$$
is a quasi-isomorphism, where the left complex is the $n$-twisted
linear framed motive of $X$ with rational coefficients in the sense
of~\cite{GP3}.
\end{thmm}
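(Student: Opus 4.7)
The plan is to recognise the two complexes as explicit Nisnevich-local models for the same rational motivic object---the rational $n$-twisted suspension spectrum of $X$---and then to invoke the Reconstruction Theorem to identify them. On the framed side, the main computation of~\cite{GP3} identifies $\bb ZF(-\times\Delta^\bullet,X\times\bb G_m^{\wedge n})_{\nis}$ with the underlying linear $S^1$-spectrum of the framed motive of $X\wedge\bb G_m^{\wedge n}$, providing a Nisnevich-local model for the $n$-twisted suspension spectrum of $X_+$ in $SH(k)$. On the Milnor--Witt side, the complex $\corrt(-\times\Delta^\bullet,X\times\bb G_m^{\wedge n})_{\nis}$ is, by the construction of Calm\`es--Fasel correspondences, the tautological model computing the $n$-twisted Milnor--Witt motive of $X$ in $DM_{MW}(k)$.

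After rationalisation, the Reconstruction Theorem (Theorem~\ref{reconstrshq}) yields an equivalence $SH(k)_{\bb Q}\iso DM_{MW}(k)_{\bb Q}$, sending the rational suspension spectrum of $X_+$ to its rational Milnor--Witt motive, and preserving $\bb G_m^{\wedge n}$-twists. A priori, both complexes therefore compute the same object in the common rational homotopy category. To upgrade this abstract coincidence to a quasi-isomorphism of specific complexes of Nisnevich sheaves, I would apply the generalised R\"ondigs--{\O}stv{\ae}r theorem (Theorem~\ref{rondigsostvaer}) in both the framed and the Milnor--Witt settings: in each case it presents the relevant complex as an explicit Eilenberg--Mac Lane-type model for the associated Nisnevich-local object, functorially in the underlying category of correspondences. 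The map $f_n$ is the one induced by the canonical passage from framed to Milnor--Witt correspondences, itself legitimised rationally by Theorem~\ref{ratsphcw}, so its compatibility with the two identifications is a naturality statement.

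The main obstacle is making the Nisnevich-local identifications of~\cite{GP3} and of~\cite{CF}--\cite{DF} precise enough at the level of complexes of sheaves to conclude that the specific map $f_n$---rather than some abstract comparison equivalence furnished by the Reconstruction Theorem---is the rational quasi-isomorphism in question. Once this precision is secured, the conclusion for every $n\geq 0$ follows uniformly from the naturality of Theorem~\ref{rondigsostvaer} in the underlying category of correspondences, together with the rational identification of the framed and Milnor--Witt motivic sphere spectra provided by Theorem~\ref{ratsphcw}.
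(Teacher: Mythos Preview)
Your overall strategy---recognising both complexes as Nisnevich-local models for the rational $n$-twisted suspension spectrum of $X$ and invoking the Reconstruction Theorem to identify them---is essentially the paper's. The paper packages this as a factorisation
\[
\Sigma^{\infty}_{S^1}\Sigma^{\infty}_{\bb G}X_+\ \xrightarrow{\ \ell\ }\ LM_{fr}^{\bb G_m}(X)_f\ \xrightarrow{\ F\ }\ M_{MW}^{\bb G_m}(X)_f,
\]
where $\ell\otimes\bb Q$ is an isomorphism in $SH(k)$ by the framed-motives theory of~\cite{GP3} and $(F\circ\ell)\otimes\bb Q$ is an isomorphism by the Reconstruction Theorem, hence $F\otimes\bb Q$ is as well. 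The levelwise components of $F$ are exactly the maps $f_n$, so one is reduced to translating ``$F\otimes\bb Q$ is a stable motivic equivalence'' into ``each $f_n\otimes\bb Q$ is a Nisnevich-local quasi-isomorphism''.

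Where your sketch goes wrong is in the tool proposed for this last translation---the very obstacle you correctly flag. Theorem~\ref{rondigsostvaer} is an equivalence of homotopy categories of modules; it does not produce levelwise models of specific bispectra and gives no mechanism for detecting that a given map of $\bb G_m$-spectra is a levelwise equivalence. (It is also unclear that the framed side sits inside the R\"ondigs--{\O}stv{\ae}r framework at all, since $Fr_*(k)$ is not preadditive in the sense of Definition~\ref{tak}.) What the paper actually uses is that both $LM_{fr}^{\bb G_m}(X)_f$ and $M_{MW}^{\bb G_m}(X)_f$ are \emph{motivically fibrant} bispectra: each level is Nisnevich-local and strictly $\bb A^1$-invariant, and the structure maps are equivalences by the respective cancellation theorems (for linear framed motives~\cite{AGP} and for Milnor--Witt correspondences~\cite{FO}). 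For maps between fibrant bispectra, a stable motivic equivalence is automatically a levelwise one, and that is precisely what converts the abstract isomorphism furnished by the Reconstruction Theorem into the concrete quasi-isomorphisms $f_n\otimes\bb Q$. These cancellation/fibrancy inputs are the missing ingredients in your proposal.
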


The author would like to thank I.~Panin for numerous discussions on
motivic homotopy theory. He is also grateful to A.~Druzhinin,
J.~Fasel and A.~Neshitov for helpful discussions on Milnor--Witt
correspondences. The author thanks D.-C. Cisinski for pointing out
results of Riou, thanks to which the main theorem of the paper has
been improved. This paper was written during the visit of the author
to IHES in September 2016. He would like to thank the Institute for
the kind hospitality.

Throughout the paper we denote by $Sm/k$ the category of smooth
separated schemes of finite type over the base field $k$.

\section{Additive categories of correspondences}\label{sectcorrs}

In this section we set up a framework within which we shall work later.

\begin{defs}\label{tak}{\rm
We say that a preadditive category $\cc A$ is a {\it category of
correspondences\/} if:

\begin{enumerate}
\item Its objects are those of $Sm/k$. Its morphisms are also referred to as {\it $\cc A$-correspondences}
or just {\it correspondences}.


\item There is a functor $\rho:Sm/k\to\cc A$, which is the identity map on objects. The image $\rho(f)$
of a morphism of smooth schemes $f:X\to Y$ will be referred to as the {\it graph of $f$\/}
and denoted by $\Gamma_f$. We have in particular that $\Gamma_{gf}=\Gamma_g\circ\Gamma_f$ and $\Gamma_{\id}=\id$.
Thus we have a functor
   $$\cc A:(Sm/k)^{\op}\times Sm/k\to\Ab,\quad (X,Y)\mapsto\cc A(X,Y),$$
such that $\cc A(1_X,g)=\Gamma_g\circ-$ and $\cc A(h,1_Y)=-\circ\Gamma_h$.

\item For every elementary Nisnevich square
   $$\xymatrix{U'\ar[r]\ar[d]&X'\ar[d]\\
               U\ar[r]&X}$$
the sequence of Nisnevich sheaves
   $$0\to\cc A(-,U')_{\nis}\to\cc A(-,U)_{\nis}\oplus\cc A(-,X')_{\nis}\to\cc A(-,X)_{\nis}\to 0$$
is exact. Moreover, we require $\cc A(-,\emptyset)_{\nis}=0$ (corresponding to the ``degenerate distinguished
square", $\emptyset$, with only one entry in the lower right-hand corner).

\item For every $\cc A$-presheaf $\cc F$ (i.e. an additive contravariant
functor from $\cc A$ to Abelian groups Ab)
the associated Nisnevich sheaf $\cc F_{\nis}$ has a unique structure of an
$\cc A$-presheaf for which the canonical morphism $\cc F\to\cc
F_{\nis}$ is a morphism of $\cc A$-pre\-sheaves.

\item There is an action of $Sm/k$ on $\cc A$
in the following sense. Given $U\in Sm/k$ there is a homomorphism
   $$\alpha_U:\cc A(X,Y)\to\cc A(X\times U,Y\times U),$$
functorial in $X$ and $Y$, such that for any morphism $f:U\to V$ in
$Sm/k$ the following square of Abelian groups is commutative
   $$\xymatrix{\cc A(X\times V,Y\times V)\ar[rrr]^{\cc A(1_X\times f,1_{Y\times V})}&&&\cc A(X\times U,Y\times V)\\
               \cc A(X,Y)\ar[u]_{\alpha_V}\ar[rrr]^{\alpha_U}&&&\cc A(X\times U,Y\times U).
               \ar[u]_{\cc A(1_{X\times U},1_Y\times f)}}$$
We require $\alpha_U(\id_X)=\id_{X\times U}$ for all $U,Z\in Sm/k$.
By the functoriality of $\alpha_U$ in $X$ we mean that the following
square of Abelian groups is commutative for any $Y\in
Sm/k$ and any morphism $f: X^{\prime} \to X$ in $\cc A$
$$\xymatrix{\cc A(X\times U,Y\times U)\ar[rrr]^{\cc A(f\times 1_U,1_{Y\times U})}&&&\cc A(X^{\prime}\times U,Y\times U)\\
               \cc A(X,Y)\ar[u]_{\alpha_U}\ar[rrr]^{\cc A(f,1_Y)}&&&\cc A(X^{\prime},Y).
               \ar[u]_{\alpha_U}}$$
By the functoriality of $\alpha_U$ in $Y$ we mean that the following
square of additive functors is commutative for any $X \in
Sm/k$ and any morphism $g: Y \to Y^{\prime}$ in $\cc A$
$$\xymatrix{\cc A(X\times U,Y\times U)\ar[rrr]^{\cc A(1_{X\times U},g\times 1_U)}&&&\cc A(X\times U,Y^{\prime}\times U)\\
               \cc A(X,Y)\ar[u]_{\alpha_U}\ar[rrr]^{\cc A(1_X,g)}&&&\cc A(X,Y^{\prime}).
               \ar[u]_{\alpha_U}}$$
\end{enumerate}

In other words, we have a functor
      $$\boxtimes:\cc A\times Sm/k\to\cc A$$
sending $(X,U)\in Sm/k\times Sm/k$ to $X\times U\in Sm/k$ and
such that $1_X\boxtimes f=\Gamma_{1_X\times f}$,
$(u+v)\boxtimes f=u\boxtimes f+v\boxtimes f$ for all
$f\in\Mor(Sm/k)$ and $u,v\in\Mor(\cc A)$.
}\end{defs}

\begin{rem}{\rm
It follows from Definition~\ref{tak}(3) that the canonical morphism
   $$\cc A(-,X)_{\nis}\oplus\cc A(-,Y)_{\nis}\to\cc A(-,X\sqcup Y)_{\nis}$$
is an isomorphism of Nisnevich sheaves.

}\end{rem}

Observe that for any category of correspondences $\cc A$, an $\cc A$-presheaf $\cc F$ and $U\in
Sm/k$ the presheaf
   $$\underline\Hom(U,\cc F):=\cc F(-\times U)$$
is an $\cc A$-presheaf. Moreover, it is functorial in $U$.

For instance $\cc A$ can be given by the naive preadditive category
of correspondences $\cc A_{naive}$ with $\cc A_{naive}(X,Y)$ being
the free abelian group generated by $\Hom_{Sm/k}(X,Y)$. Non-trivial
examples are given by finite correspondences $\corr$ in the sense of
Voevodsky~\cite{Voe1}, finite Milnor--Witt correspondences $\corrt$
in the sense of Calm\`es--Fasel~\cite{CF} or $K_0^{\oplus}$ in the
sense of Walker~\cite{Wlk}. Given a ring $R$ (not necessarily
commutative) which is flat as a $\bb Z$-algebra and a category of
correspondences $\cc A$, we can form an {\it additive category of
correspondences\/} $\cc A_R$ with coefficients in $R$. By
definition, $\cc A_R(X,Y):=\cc A(X,Y)\otimes R$ for all $X,Y\in
Sm/k$.

\begin{defs}\label{gret}{\rm
We say that a category of correspondences $\cc A$ is a {\it
$V$-category of correspondences\/} (``$V$" for Voevodsky) if for any
$\bb A^1$-invariant $\cc A$-presheaf of abelian groups $\cc F$ the
associated Nisnevich sheaf $\cc F_{\nis}$ is $\bb A^1$-invariant.
Recall that a Nisnevich sheaf $\cc F$ of abelian groups is {\it
strictly $\bb A^1$-invariant\/} if for any $X\in {Sm}/k$, the
canonical morphism
   $$H^*_{\nis}(X,\cc F)\to H^*_{\nis}(X\times\bb A^1,\cc F)$$
is an isomorphism.
A $V$-category of correspondences $\cc A$ is a {\it strict
$V$-category of correspondences\/} if for any $\bb A^1$-invariant
$\cc A$-presheaf of abelian groups $\cc F$ the associated Nisnevich
sheaf $\cc F_{\nis}$ is strictly $\bb A^1$-invariant.

}\end{defs}

Observe that any (strict) $V$-category of correspondences is a
(strict) $V$-ringoid in the sense of~\cite{GP1}. For example $\corr$
and $K^\oplus_0$ are $V$-categories of correspondences, which are
strict whenever the base field $k$ is perfect (see~\cite{Voe}
and~\cite{Wlk}). The category $\corrt$ is a $V$-category of
correspondences, which is strict if $k$ is infinite and perfect with $\chr k\not=2$
(see~\cite{DF,Kol}). Observe that if $\cc A$ is a $V$-category of
correspondences then so is $\cc A_R$ with $R$ commutative flat as a
$\bb Z$-algebra. Moreover, if $R$ is a ring of fractions of $\bb Z$ like,
for example, $\bb Z[\frac1p]$ or $\bb Q$, then $\cc A_R$ is a strict
$V$-category of correspondences whenever $\cc A$ is.

Let $\cc A$ be a category of correspondences. Let $Sh(Sm/k)$
(respectively $Sh(\cc A)$) denote the category of Nisnevich sheaves
on $Sm/k$ (respectively Nisnevich $\cc A$-sheaves). Similar
to~\cite[6.4]{GP} $Sh(\cc A)$ is a Grothendieck category such that
$\{\cc A(-,X)_{\nis}\}_{X\in Sm/k}$ is a family of generators of
$Sh(\cc A)$. Denote by $D(Sh(Sm/k))$ and $D(Sh(\cc A))$ the
corresponding derived categories of unbounded complexes. Note that
$D(Sh(Sm/k))=D(Sh(\cc A_{naive}))$.

The category $\bb M$ of motivic spaces consists of contravariant
functors from $Sm/k$ to pointed simplicial sets. We refer the reader
to~\cite{Jar2,MV} for the definition of motivic weak equivalences
between motivic spaces.

\begin{lem}\label{datsyuk}
Given any field $k$, let $\cc A$ be a category of correspondences.
Then the natural map
   $$f:\cc A(-,X\times\bb A^1)\to\cc A(-,X)$$
is a motivic weak equivalence in the category of motivic spaces $\bb M$.
\end{lem}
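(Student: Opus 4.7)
The plan is to exhibit $f$ as an $\bb A^1$-homotopy equivalence of motivic spaces; since $\bb A^1$-homotopy equivalences are motivic weak equivalences in $\bb M$, this will finish the proof.

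First I would observe that $f=(\Gamma_p)_*$ is postcomposition by the graph of the projection $p:X\times\bb A^1\to X$, and that the zero section $s:X\to X\times\bb A^1$, $x\mapsto(x,0)$, furnishes a strict right inverse $s_*:=(\Gamma_s)_*$ because $p\circ s=\id_X$ forces $\Gamma_p\circ\Gamma_s=\id$. Consequently it suffices to produce an $\bb A^1$-homotopy from $s_*\circ f$ to $\id_{\cc A(-,X\times\bb A^1)}$.

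To build this homotopy I would use the multiplication $m:\bb A^1\times\bb A^1\to\bb A^1$ together with the action supplied by Definition~\ref{tak}(5). Let $\mu:=\id_X\times m:X\times\bb A^1\times\bb A^1\to X\times\bb A^1$ and define
$$H:\cc A(-,X\times\bb A^1)\times\bb A^1\to\cc A(-,X\times\bb A^1)$$
sectionwise at $U\in Sm/k$ by the assignment
$$(\alpha,t)\longmapsto \Gamma_\mu\circ(\alpha\boxtimes\id_{\bb A^1})\circ\Gamma_{(\id_U,t)},$$
where $\alpha\boxtimes\id_{\bb A^1}:=\alpha_{\bb A^1}(\alpha)$ and $(\id_U,t):U\to U\times\bb A^1$ is the scheme morphism $u\mapsto(u,t(u))$. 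Naturality of $H$ in $U$ is immediate from the second compatibility square of axiom~(5) (functoriality of $\alpha_{\bb A^1}$ in its source variable), and $H$ is pointed because $\alpha_{\bb A^1}$ is a group homomorphism, hence $H(0,t)=0$ for every $t$.

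The main technical step, and essentially the only calculation, is the endpoint verification. For each $i\in\{0,1\}$, applying the first compatibility square of axiom~(5) to the morphism $i:\mathrm{pt}\to\bb A^1$ (and using $\alpha_{\mathrm{pt}}=\id$ under the canonical identifications $Y\times\mathrm{pt}=Y$) rewrites $(\alpha\boxtimes\id_{\bb A^1})\circ\Gamma_{(\id_U,i)}$ as $\Gamma_{\id_{X\times\bb A^1}\times i}\circ\alpha$; postcomposing with $\Gamma_\mu$ and invoking the scheme-level identities $\mu\circ(\id_{X\times\bb A^1}\times 0)=s\circ p$ and $\mu\circ(\id_{X\times\bb A^1}\times 1)=\id_{X\times\bb A^1}$ then yields $H(\alpha,0)=(s_*\circ f)(\alpha)$ and $H(\alpha,1)=\alpha$, as required. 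Only the axioms of a category of correspondences intervene in the argument; no Nisnevich descent or $\bb A^1$-invariance hypothesis on $\cc A$ is needed for this particular lemma.
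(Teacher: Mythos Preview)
Your proof is correct and follows essentially the same route as the paper: both arguments exhibit $f$ as an $\bb A^1$-deformation retract by using the section induced by $X\hookrightarrow X\times\bb A^1$ and building the required $\bb A^1$-homotopy out of the multiplication map $\bb A^1\times\bb A^1\to\bb A^1$ together with the action $\alpha_{\bb A^1}$ from axiom~(5). The only difference is presentational---the paper packages the homotopy via the Yoneda isomorphism $\Hom(\cc A(-,X\times\bb A^1),\cc A(-\times\bb A^1,X\times\bb A^1))\cong\cc A(X\times\bb A^1\times\bb A^1,X\times\bb A^1)$ and adjunction, whereas you write out the resulting map $(\alpha,t)\mapsto\Gamma_\mu\circ\alpha_{\bb A^1}(\alpha)\circ\Gamma_{(\id_U,t)}$ sectionwise; unwinding the paper's construction yields exactly your formula.
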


\begin{proof}
We follow an argument of~\cite[p.~694]{RO}. As in classical
algebraic topology, an inclusion of pointed motivic spaces $g:A\to B$ is an
$\bb A^1$-deformation retract if there exist a map $r:B\to A$ such
that $rg=\id_A$ and an $\bb A^1$-homotopy $H:B\wedge\bb A^1_+\to B$
between $gr$ and $\id_B$ which is constant on $A$. Then $\bb
A^1$-deformation retracts are motivic weak equivalences.

There is an obvious map $r:\cc A(-,X)\to\cc A(-,X\times\bb A^1)$
such that $fr=1$. Since $Sm/k$ naturally acts on $\cc A$, it follows that
$\cc A(-\times\bb A^1,X\times\bb A^1)$ is an $\cc A$-presheaf.

There is a natural isomorphism
   $$\Hom(\cc A(-,X\times\bb A^1),\cc A(-\times\bb A^1,X\times\bb A^1))
     \cong\cc A(X\times\bb A^1\times\bb A^1,X\times\bb A^1),$$
where the Hom-set on the left is taken in the category of $\cc A$-presheaves.
Consider the functor $\rho:Sm/k\to\cc A$.
Denote by $\alpha$ the obvious map $\bb A^1\times\bb A^1\to\bb
A^1$. We set $h=\rho(1_X\times\alpha)$; then $h$ uniquely determines a
morphism of $\cc A$-presheaves
   $$h':\cc A(-,X\times\bb A^1)\to\cc A(-\times\bb A^1,X\times\bb A^1).$$
This morphism can be regarded as a morphism in $\bb M$,
denoted by the same letter. By adjointness $h'$
uniquely determines a map in $\bb M$
   \begin{gather*}
    H:\cc A(-,X\times\bb A^1)\wedge \bb A^1_+\to\cc A(-,X\times\bb A^1).
   \end{gather*}
Then $H$ yields an $\bb A^1$-homotopy between the identity map
and $rf$. We see that $f$ is a motivic weak equivalence, as required.
\end{proof}

By the general localization theory of compactly generated
triangulated categories~\cite{N} one can localize
$D(Sh(\cc A))$ with respect to the localizing subcategory $\cc L$
generated by complexes of the form
   $$\cdots\to 0\to\cc A(-,X\times\bb A^1)_{\nis}\xrightarrow{pr_X}\cc A(-,X)_{\nis}\to 0\to\cdots,\quad X\in Sm/k.$$
The resulting quotient category $D(Sh(\cc A))/\cc L$ is denoted by
$D_{\bb A^1}(Sh(\cc A))$.

If we denote by $DM_{\cc A}^{eff}(k)$ the full subcategory of $D(Sh(\cc A))$ consisting of
the complexes with strictly $\bb A^1$-invariant homology sheaves, then similar to a theorem of Voevodsky~\cite{Voe}
the composite functor
   $$DM_{\cc A}^{eff}(k)\hookrightarrow D(Sh(\cc A))\to D_{\bb A^1}(Sh(\cc A))$$
is an equivalence of triangulated categories whenever $\cc A$ is a strict $V$-category of correspondences. Moreover, the functor
   $$C_*:D(Sh(\cc A))\to D(Sh(\cc A)),\quad X\mapsto Tot(X(-\times\Delta^\bullet)),$$
lands in $DM_{\cc A}^{eff}(k)$. The kernel of $C_*$ is $\cc L$ and $C_*$ is left adjoint to the inclusion
functor
   $$i:DM_{\cc A}^{eff}(k)\hookrightarrow D(Sh(\cc A))$$
(see~\cite{Voe} for details or~\cite[3.5]{GP1}).

Let $(\bb G_m,1)\in\bb M$ denote $\bb G_m$ pointed at 1 and let $\bb G_m^{\cc A}$ be the sheaf
   $$\coker(\cc A(-,pt)_{\nis}\to\cc A(-,\bb G_m)_{\nis}),$$
induced by the map $pt\mapsto 1\in\bb G_m$ in $Sm/k$. Regarding it as a complex concentrated in zeroth degree,
we have an endofunctor
   $$-\boxtimes\bb G_m^{\cc A}:D_{\bb A^1}(Sh(\cc A))\to D_{\bb A^1}(Sh(\cc A)),$$
induced by the action of $Sm/k$ on $\cc A$. In more detail,
by~\cite[3.4]{AG} $\Ch(Sh(\cc A))$ is a Grothendieck category with
generators of the form $\{D^n\cc A(-,U)_{\nis}\}_{n\in\bb Z,U\in
Sm/k}$. Here $D^n\cc A(-,U)_{\nis}$ is the complex which is $\cc
A(-,U)_{\nis}$ in degrees $n$ and $n-1$ and 0 elsewhere, with
interesting differential being the identity map. Every complex
$X\in\Ch(Sh(\cc A))$ is written as a colimit of generators
   $$X=\colim_{(D^n\cc A(-,U)_{\nis}\to X)}D^n\cc A(-,U)_{\nis}.$$
We set,
   $$X\boxtimes\bb G_m^{\cc A}:=\colim_{(D^n\cc A(-,U)_{\nis}\to X)}D^n\cc A(-,U\wedge\bb G_m^{\wedge 1})_{\nis},$$
where the sheaf $\cc A(-,U\wedge\bb G_m^{\wedge 1})_{\nis}:=\coker(\cc A(-,U\times pt)_{\nis}\to\cc A(-,U\times\bb G_m)_{\nis})$.

Stabilizing $D_{\bb A^1}(Sh(\cc A))$ in the $\bb G_m$-direction with
respect to this endofunctor, we arrive at the category $D_{\bb
A^1}^{st}(Sh(\cc A))$. If $\cc A$ is a strict $V$-category of
correspondences, we can likewise stabilize $DM_{\cc A}^{eff}(k)$ in
the $\bb G_m$-direction. The resulting category is denoted by
$DM_{\cc A}(k)$. The triangulated equivalence $C_*:D_{\bb
A^1}(Sh(\cc A))\to DM_{\cc A}^{eff}(k)$ extends to a triangulated
equivalence
   $$C_*:D_{\bb A^1}^{st}(Sh(\cc A))\to DM_{\cc A}(k).$$

Given a category of correspondences $\cc A$ and $p>0$, we shall write $D^{st}_{\bb A^1}(Sh(\cc A))[p^{-1}]$
(respectively $D^{st}_{\bb A^1}(Sh(\cc A))_{\bb Q}$) to denote the
category $D^{st}_{\bb A^1}(Sh(\cc A\otimes\bb Z\left[\frac1p\right]))(k)$
(respectively $D^{st}_{\bb A^1}(Sh(\cc A\otimes\bb Q))(k)$). Note that $\cc
A\otimes\bb Z\left[\frac1p\right]$ and $\cc A\otimes\bb Q$ are
categories of correspondences.

\begin{defs}{\rm
We say that a category of correspondences $\cc A$ is {\it
symmetric monoidal\/} if the usual product of schemes defines a
symmetric monoidal structure on $\cc A$.

}\end{defs}

The categories $\corr$, $\corrt$, $K_0^{\oplus}$ are examples of
symmetric monoidal $V$-categories (see~\cite{CF,DF,Sus,SV1,Wlk} for
more details). $\cc A_{naive}$ is obviously symmetric monoidal.

Given a symmetric monoidal category of correspondences $\cc A$,
a theorem of Day~\cite{Day} implies that the category of $\cc A$-presheaves
$PSh(\cc A)$ is a closed symmetric monoidal category with a tensor
product defined as
   $$X\otimes Y=\int^{(U,V)\in\cc A\times\cc A}X(U)\otimes Y(V)\otimes\cc A(-,U\times V).$$
The monoidal unit equals $\cc A(-,pt)$ with $pt=\spec k$.

The tensor product is then extended to a tensor product $\wt\otimes$
on $Sh(\cc A)$. Namely, for all $F,G\in Sh(\cc A)$ we set
$F\wt{\otimes}G$ to be the sheaf associated with the presheaf
$F\otimes G$ defined above. With this tensor product $Sh(\cc A)$ is
a closed symmetric monoidal category with $\cc A(-,pt)_{\nis}$ a
monoidal unit. Likewise, $\wt\otimes$ is extended to chain complexes
$\Ch(Sh(\cc A))$ which also defines a closed symmetric monoidal
structure on the derived category $D(Sh(\cc A))$ with respect to the
derived tensor product $\wt{\otimes}^L$ (we also refer the reader to~\cite[Section~2]{SV1}
and~\cite[3.3]{CD1}). It is straightforward to
show that the localizing subcategory $\cc L$ of $D(Sh(\cc A))$
defined above is closed under the derived tensor product
$\wt{\otimes}^L$. As a result, one obtains a symmetric monoidal
product on $D^{st}_{\bb A^1}(Sh(\cc A))$ (and on $DM^{eff}_{\cc A}(k)$,
$DM_{\cc A}(k)$ if $\cc A$ is a strict $V$-category).

\begin{rem}\label{dmsymm}{\rm
Let $\cc A$ be a symmetric monoidal strict $V$-category of correspondences.
With a little extra care we describe the tensor product in $DM_{\cc A}(k)$ explicitly as follows.
The endofuctor $-\boxtimes\bb G_m^{\cc A}:\Ch(Sh(\cc A))\to\Ch(Sh(\cc A))$ equals
$-\wt{\otimes}\bb G_m^{\cc A}$. $DM_{\cc A}(k)$ is equivalent to the homotopy category of
the symmetric $\bb G_m^{\cc A}$-spectra associated to a monoidal motivic model
category structure on $\Ch(Sh(\cc A))$. We also refer the reader to~\cite{DF},
where a monoidal model structure is defined in the case of $MW$-correspondences.

}\end{rem}

\section{The additive motivic sphere spectrum $\bb S^{\cc A}$}

Let $Sp_{S^1,\bb G_m}(k)$ denote the category of symmetric $(S^1,\bb
G_m)$-bispectra, where the $\bb G_m$-direction is associated with
the pointed motivic space $(\bb G_m,1)$. It is equipped with a
stable motivic model category structure~\cite{Jar2}. Denote by $SH(k)$
its homotopy category. The category $SH(k)$ has a closed symmetric
monoidal structure with monoidal unit being the motivic sphere
spectrum $\bb S$ (see~\cite{Jar2} for details). Given $p>0$, the
category $Sp_{S^1,\bb G_m}(k)$ has a further model structure whose
weak equivalences are the maps of bispectra $f:X\to Y$ such that
the induced map of bigraded Nisnevich sheaves
$f_*:\underline{\pi}_{*,*}^{\bb A^1}(X)\otimes\bb Z[\frac1p]\to\underline{\pi}_{*,*}^{\bb A^1}(Y)\otimes\bb Z[\frac1p]$
is an isomorphism. In what follows we denote its homotopy category by
$SH(k)[p^{-1}]$. The category $SH(k)_{\bb Q}$ is defined in a similar fashion.
The corresponding classes of weak equivalences are also called
{\it $p^{-1}$-stable/$\bb Q$-stable motivic weak equivalences}.
We also refer the reader to~\cite[Appendix~A]{RO1} for
general localization theory of motivic spectra.\label{bobrovsky}

It is worth to mention that any other kind of motivic spectra or
motivic functors in the sense of~\cite{DRO1} together with the
stable motivic model structure lead to equivalent definitions of
$SH(k)[p^{-1}]$ and $SH(k)_{\bb Q}$ respectively.

The isomorphism $\tw:(\bb G_m,1)\wedge(\bb G_m,1)\lra{\cong}(\bb
G_m,1)\wedge(\bb G_m,1)$ permuting factors is an involution, i.e.
$\tw^2=\id$. It gives an endomorphism $\epsilon:\bb S\to\bb S$ such
that $\epsilon^2=\id$. If we denote by $SH(k)[2^{-1}]$ the stable
motivic homotopy theory with $\bb
Z\left[\frac12\right]$-coefficients, then
   $$\epsilon_+=-\frac{\epsilon -1}2\quad{\textrm{and}}\quad\epsilon_-=\frac{\epsilon+1}2$$
are two orthogonal idempotent endomorphisms of $\bb S[2^{-1}]$ such
that $\epsilon_++\epsilon_-=\id$ and
$\epsilon=\epsilon_--\epsilon_+$. It follows that
   $$\bb S[2^{-1}]=\bb S_+\oplus\bb S_-,$$
where $\bb S_+$ (respectively $\bb S_-$) corresponds to the
idempotent $\epsilon_+$ (respectively $\epsilon_-$).

By~\cite[Section~6]{MorICTP} the stable algebraic Hopf map $\eta:\bb
S\to\bb S^{-1,-1}$ satisfies $\eta\epsilon_+=0$, $\eta\epsilon=\eta$
in $SH(k)[2^{-1}]$. Moreover,
   $$\bb S_-\hookrightarrow\bb S[2^{-1}]\xrightarrow\eta\bb S^{-1,-1}[2^{-1}]\twoheadrightarrow\bb S^{-1,-1}_-$$
is an isomorphism in $SH(k)[2^{-1}]$, denoted by the same letter
$\eta$. In particular, there is an isomorphism
   $$\bb S_-\cong\bb S[\eta^{-1},2^{-1}]=\hocolim_{SH(k)[2^{-1}]}
     (\bb S\lra{\eta}\bb S^{-1,-1}\lra{\eta}\bb S^{-2,-2}\lra{\eta}\cdots).$$

The decomposition $\bb S[2^{-1}]=\bb S_+\oplus\bb S_-$ of the
monoidal unit of $SH(k)[2^{-1}]$ implies $SH(k)[2^{-1}]$ is a
product of symmetric monoidal triangulated categories
   $$SH(k)[2^{-1}]=SH(k)_+\times SH(k)_-,$$
where $\bb S_+$ and $\bb S_-$ are monoidal units for $SH(k)_+$ and
$SH(k)_-$ respectively.

Consider a category of correspondences $\cc A$. There is
a natural triangulated functor
   $$F:SH(k)\to D_{\bb A^1}^{st}(Sh(\cc A)).$$
In more detail, there is an adjoint pair~\cite[Section~6]{GP}
   $$SH_{S^1}(k)\rightleftarrows\Ho(\Mod \cc A^{EM}),$$
where $\Mod \cc A^{EM}$ is the category of $\cc A^{EM}$-modules equipped with the stable projective motivic
model structure over the Eilenberg--Mac~Lane spectral category $\cc A^{EM}$ associated with $\cc A$. Also,
there is a zig-zag of triangulated equivalences between $\Ho(\Mod\cc A^{EM})$ and $D_{\bb A^1}(Sh(\cc A))$.
Then the resulting functor
   $$SH_{S^1}(k)\to D_{\bb A^1}(Sh(\cc A))$$
is naturally extended to $\bb G_m$-spectra in both categories.

The functor $F$ sends each bispectrum $\Sigma^\infty_{S^1}\Sigma^\infty_{\bb
G_m}X_+$, $X\in Sm/k$, to a $\bb G_m$-spectrum isomorphic to
   $$\cc A(X)_{\bb G_m}^\infty:=(\cc A(-,X)_{\nis},\cc A(-,X\wedge\bb G_m^{\wedge 1})_{\nis},
     \cc A(-,X\wedge\bb G_m^{\wedge2})_{\nis},\ldots)\in D_{\bb A^1}^{st}(Sh(\cc A)).$$
Here each entry is a complex in degree zero, each $\cc
A(-,X\wedge\bb G_m^{\wedge n})_{\nis}$ is a sheaf associated to the
presheaf
   $$\cc A(-,X\times\bb G_m^{\wedge n}))=\cc A(-,X\times\bb G_m^{\times n}))/
     \sum^n_{s=1}(i_s)_*\cc A(-,X\times\bb G_m^{\times n-1})),$$
where the natural additive functors $i_s:\cc A(-,X\times\bb
G_m^{\times(\ell-1)})\to\cc A(-,X\times\bb G_m^{\times\ell})$ are
induced by the embeddings $i_s:\bb G_m^{\times(\ell-1)}\to\bb
G_m^{\times\ell}$ of the form
   $$(x_1,\ldots,x_{\ell-1})\longmapsto(x_1,\ldots,1,\ldots,x_{\ell-1}),$$
where 1 is the $s$th coordinate.

Note that $F$ factors through the stable $\bb A^1$-derived category $D_{\bb A^1}(k):=D_{\bb A^1}^{st}(Sh(\cc A_{naive}))$
in the sense of Morel~\cite{Mor0} (see~\cite[Section~5.3]{CD} as well). In what follows we shall denote by
$H_{\bb A^1}\bb Z$ its monoidal unit. Note that $H_{\bb A^1}\bb Z$ is the image of $\bb S$ under the
canonical functor
   $$SH(k)\to D_{\bb A^1}(k).$$

As above, one has decompositions
   $$H_{\bb A^1}\bb Z[2^{-1}]=H_{\bb A^1}\bb Z_{+}\oplus H_{\bb A^1}\bb Z_{-},\quad  D_{\bb A^1}(k)[2^{-1}]= D_{\bb A^1}(k)_+\times D_{\bb A^1}(k)_-.$$

In what follows we shall write $\bb S^{\cc A}$ to denote the
spectrum $\cc A(pt)_{\bb G_m}^\infty$ and call it the {\it additive motivic
$\cc A$-sphere spectrum}. Taking the Eilenberg--Mac~Lane $S^1$-spectra for
each sheaf $\cc A(-,X\wedge\bb G_m^{\wedge n})_{\nis}$ (see, e.g.,~\cite[\S3.2]{Mor})
we can regard $\bb S^{\cc A}$ as an ordinary $(S^1,\bb G_m)$-bispectrum (and denote it
by the same letter if there is no likelihood of confusion).

The canonical triangulated functor
   $$F:SH(k)\to D_{\bb A^1}^{st}(Sh(\cc A))$$
takes the ordinary motivic sphere $\bb S$ to a spectrum isomorphic to $\bb S^{\cc A}$. $F(\eta)$ induces a morphism
   $$\eta_{\cc A}:\bb S^{\cc A}\to(\bb S^{\cc A})^{-1,-1}.$$
We also set
   $$\bb S^{\cc A}[\eta_{\cc A}^{-1}]:=\hocolim_{D_{\bb A^1}^{st}(Sh(\cc A))}(\bb S^{\cc A}\xrightarrow{\eta_{\cc A}}(\bb S^{\cc A})^{-1,-1}
     \xrightarrow{\eta_{\cc A}}(\bb S^{\cc A})^{-2,-2}\xrightarrow{\eta_{\cc A}}\cdots)$$
and $\bb S^{\cc A}_-\cong F(\bb S_-)$, $\bb S^{\cc A}_+\cong F(\bb S_+)$. Then we have the following relations in $D_{\bb A^1}^{st}(Sh(\cc A))$:
   $$\bb S^{\cc A}[2^{-1}]=\bb S^{\cc A}_+\oplus\bb S^{\cc A}_-\quad{\textrm{and}}\quad\bb S^{\cc A}_-\cong\bb S^{\cc A}[\eta_{\cc A}^{-1},2^{-1}].$$
As above, $\eta_{\cc A}$ annihilates $\bb S^{\cc A}_+$ and is an isomorphism on $\bb S^{\cc A}_-$.

\begin{rem}{\rm
Following an equivalent description of $DM_{\cc A}(k)$ over a symmetric monoidal strict $V$-category of correspondences
in Remark~\ref{dmsymm} in terms of $\bb G_m^{\cc A}$-symmetric spectra (in this case $D_{\bb A^1}^{st}(Sh(\cc A)),DM_{\cc A}(k)$ are
canonically equivalent), the additive motivic $\cc A$-sphere spectrum $\bb S^{\cc A}$ is nothing but the symmetric sequence
   $$(\cc A(-,pt),\bb G_m^{\cc A},\bb G_m^{\cc A}\wt{\otimes}\bb G_m^{\cc A},\ldots,(\bb G_m^{\cc A})^{\wt{\otimes}n},\ldots),$$
where $\Sigma_n$ acts on $(\bb G_m^{\cc A})^{\wt{\otimes}n}$ by permutation. It is a commutative monoid
in the category of symmetric sequences in $\Ch(Sh(\cc A))$ (see~\cite[Section~7]{Hov}). Moreover,
the motivic model category $Sp^\Sigma(\Ch(Sh\cc A),\bb G_m^{\cc A})$ of symmetric $\bb G_m^{\cc A}$-spectra
associated with the motivic model category structure on $\Ch(Sh\cc A)$ is the category of modules in the category
of symmetric sequences over the commutative monoid $\bb S^{\cc A}$. The homotopy category of
$Sp^\Sigma(\Ch(Sh\cc A),\bb G_m^{\cc A})$, which is equivalent to $DM_{\cc A}(k)$,
is a closed symmetric monoidal category with $\bb S^{\cc A}$ a monoidal unit.

}\end{rem}

\begin{defs}{\rm
Let $\cc A$ be a category of $V$-correspondences.
Following~\cite{Voe1,SV1,Sus} the {\it $\cc A$-motive of a smooth
algebraic variety\/} $X\in Sm/k$, denoted by $M_{\cc A}(X)$, is the complex associated to the
simplicial Nisnevich sheaf
   $$n\longmapsto\cc A(-\times\Delta^n,X)_{\nis},\quad\Delta^n=\spec k[t_0,\ldots,t_n]/(t_0+\cdots+t_n-1).$$

}\end{defs}

\begin{lem}\label{aspectra}
Let $\cc A$ be a strict category of $V$-correspondences and $\cc X$
a motivic $S^1$-spectrum such that its presheaves $\pi_*(\cc X)$ of
homotopy groups are homotopy invariant $\cc A$-presheaves. Then
every Nisnevich local fibrant replacement $\cc X_f$ of $\cc X$ is
motivically fibrant.
\end{lem}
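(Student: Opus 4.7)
The plan is to reduce motivic fibrancy of $\cc X_f$ to $\bb A^1$-invariance of its derived sections, and then extract this invariance from the strict $V$-property of $\cc A$ via the Nisnevich descent spectral sequence.

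First I would recall that, for $S^1$-spectra, the motivic model structure is the Bousfield localization of the Nisnevich local model structure at the projections $U\times\bb A^1\to U$. Hence $\cc X_f$, which is already Nisnevich local fibrant by hypothesis, is motivically fibrant if and only if the projection $p_U:U\times\bb A^1\to U$ induces a stable weak equivalence
\[
p_U^*:\cc X_f(U)\longrightarrow\cc X_f(U\times\bb A^1)
\]
for every $U\in Sm/k$. So the whole task is to verify this $\bb A^1$-invariance on sections.

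Next, I would invoke the Brown--Gersten/Nisnevich hyperdescent spectral sequence available for any Nisnevich local fibrant $S^1$-spectrum $\cc Y$ on a Noetherian base of finite Nisnevich cohomological dimension:
\[
E_2^{p,q}(U)=H^p_{\nis}\bigl(U,\underline{\pi}_q(\cc Y)\bigr)\Longrightarrow \pi_{q-p}\bigl(\cc Y(U)\bigr),
\]
where $\underline{\pi}_q(\cc Y)$ is the Nisnevich sheafification of the presheaf $U\mapsto\pi_q(\cc Y(U))$. Applied to $\cc Y=\cc X_f$, and using that a Nisnevich local fibrant replacement does not change the Nisnevich sheafified homotopy groups, the $E_2$-page becomes $H^p_{\nis}(U,\underline{\pi}_q(\cc X))$. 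The spectral sequence is functorial in $U$, so I get a morphism from the one for $U\times\bb A^1$ to the one for $U$ induced by $p_U$.

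Now I would use the hypotheses on $\cc A$ and $\cc X$: by assumption each $\pi_q(\cc X)$ is an $\bb A^1$-invariant $\cc A$-presheaf, and since $\cc A$ is a \emph{strict} $V$-category of correspondences, Definition \ref{gret} gives that each sheaf $\underline{\pi}_q(\cc X)$ is strictly $\bb A^1$-invariant. By definition this means $p_U$ induces isomorphisms
\[
H^p_{\nis}\bigl(U,\underline{\pi}_q(\cc X)\bigr)\xrightarrow{\ \cong\ } H^p_{\nis}\bigl(U\times\bb A^1,\underline{\pi}_q(\cc X)\bigr)
\]
for all $p,q$. Consequently the comparison map of spectral sequences is an isomorphism on $E_2$-pages, and strong convergence (guaranteed by finite Nisnevich cohomological dimension) forces $p_U^*$ to be an isomorphism on the abutments $\pi_*(\cc X_f(U))$, i.e.\ a stable weak equivalence of spectra. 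This yields the desired motivic fibrancy.

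The only real obstacle is making sure the descent spectral sequence exists and converges strongly in the setting of unbounded $S^1$-spectra; I would handle this by working section-wise over finite-dimensional Noetherian $U$, where the standard arguments of Morel--Voevodsky (and Jardine, as invoked in the excerpt) apply, and note that the strict $\bb A^1$-invariance delivered by Definition \ref{gret} is precisely what is needed to convert $\bb A^1$-invariance of the homotopy presheaves into $\bb A^1$-invariance of all the Nisnevich cohomology groups appearing on the $E_2$-page.
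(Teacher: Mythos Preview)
Your argument is correct and matches what the paper has in mind: the formal proof simply cites \cite[6.2.7]{Mor}, and the remark immediately following the lemma spells out precisely your Brown--Gersten spectral sequence reduction. The only difference is in the treatment of unbounded spectra: you appeal directly to strong convergence via finite Nisnevich cohomological dimension, whereas the paper's remark instead first handles the connective case (where convergence is unproblematic) and then writes $\cc X_f=\hocolim_{n\to-\infty}(\cc X_{\geq n})_f$ to pass to the general case. Both routes are valid; the truncation argument is slightly more elementary and sidesteps any need to discuss hypercompleteness of the Nisnevich topos.
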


\begin{proof}
Since $\cc A$ is a strict category of $V$-correspondences, the
sheaves $\pi_*(\cc X)_{\nis}$ are strictly $\bb A^1$-invariant. Our
claim now follows from~\cite[6.2.7]{Mor}.
\end{proof}

\begin{rem}{\rm
It is worth to mention that Lemma~\ref{aspectra} does not depend on
Morel's connectivity theorem~\cite[6.1.8]{Mor}. Indeed, it easily
follows for connected spectra from Brown--Gersten spectral sequence.
Then we use the fact that $\cc X_f=\hocolim_{n\to-\infty} (\cc
X_{\geq n})_f$, where $\cc X_{\geq n}$ is the naive $n$th truncation
of $\cc X$.

}\end{rem}

The spectrum $\cc A(X)_{\bb G_m}^\infty$ is motivically equivalent
to\label{formone}
   \begin{equation*}\label{bispa}
    M_{\cc A}^{\bb G_m}(X):=(M_{\cc A}(X),M_{\cc A}(X\wedge\bb G_m^{\wedge 1}),
        M_{\cc A}(X\wedge\bb G_m^{\wedge2}),\ldots).
   \end{equation*}
of Nisnevich $\cc A$-sheaves associated with the simplicial sheaf
$n\longmapsto\cc A(-\times\Delta^n,X\wedge\bb G_m^{\wedge n})_{\nis}$.

\begin{defs}\label{kogom}{\rm
Let $\cc A$ be a category of $V$-correspondences. The {\it bivariant
$\cc A$-motivic cohomology groups\/} are defined by
    $$H_{\cc A}^{p,q}(X,Y):=H^p_{\nis}(X,\cc A(-\times\Delta^\bullet,Y\wedge\bb G_m^{\wedge q})_{\nis}[-q]),$$
where the right hand side stands for Nisnevich hypercohomology
groups of $X$ with coefficients in $\cc
A(-\times\Delta^\bullet,Y\wedge\bb G_m^{\wedge q})_{\nis}[-q]$ (the
shift is cohomological).

Following~\cite{GP2} we say that the bigraded presheaves $H^{*,*}_{\cc A}(-,Y)$ satisfy
the {\it cancellation property\/} if all maps
   $$\beta^{p,q}:H^{p,q}_{\cc A}(X,Y)\to H^{p+1,q+1}_{\cc A}(X\wedge\bb G_m,Y).$$
induced by the structure maps of the spectrum $M_{\cc A}^{\bb G_m}(Y)$ are
isomorphisms.

}\end{defs}

Given $Y\in Sm/k$, denote by
   $$M_{\cc A}^{\bb G_m}(Y)_f:=(M_{\cc A}(Y)_f,M_{\cc A}(Y\wedge\bb G_m^{\wedge 1})_f,M_{\cc A}(Y\wedge\bb G_m^{\wedge2})_f,\ldots),$$
where each $M_{\cc A}(Y\wedge\bb G_m^{\wedge n})_f$ is a fibrant
Nisnevich local replacement of $M_{\cc A}(Y\wedge\bb G_m^{\wedge
n})$. It is important to note that each $M_{\cc A}(Y\wedge\bb
G_m^{\wedge n})_f$ can be constructed within $\Ch(Sh(\cc A))$
whenever $\cc A$ is a strict category of $V$-correspondences. (this
can be shown similar to~\cite[5.12]{GP}). Observe as well that $\cc
A(Y)_{\bb G_m}^\infty$ is motivically equivalent to $M_{\cc A}^{\bb
G_m}(Y)_f$.

\begin{lem}\label{brus}
Suppose $\cc A$ is a strict $V$-category of correspondences. The
bigraded presheaves $H^{*,*}_{\cc A}(-,Y)$ satisfy the cancelation
property if and only if $M_{\cc A}^{\bb G_m}(Y)_f$ is motivically
fibrant as an ordinary motivic bispectrum.
\end{lem}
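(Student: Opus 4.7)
The plan is to reduce motivic fibrancy of the bispectrum $M_{\cc A}^{\bb G_m}(Y)_f$ to the $\Omega$-bispectrum condition, and then identify this condition explicitly with the cancellation property.

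First, I would observe that each slice $M_{\cc A}(Y\wedge\bb G_m^{\wedge n})_f$ is by construction Nisnevich locally fibrant as a motivic $S^1$-spectrum. Since $\cc A$ is a strict $V$-category of correspondences, the presheaves of stable homotopy groups of $M_{\cc A}(Y\wedge\bb G_m^{\wedge n})$ are homotopy invariant $\cc A$-presheaves (they arise from simplicial sheaves of the form $\cc A(-\times\Delta^\bullet,Y\wedge\bb G_m^{\wedge n})_{\nis}$). Thus Lemma~\ref{aspectra} applies slice-wise, so each level of $M_{\cc A}^{\bb G_m}(Y)_f$ is motivically fibrant as a motivic $S^1$-spectrum. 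Consequently, motivic fibrancy of the bispectrum reduces to the $\Omega_{\bb G_m}$-spectrum condition: the adjoint structure maps
\[
M_{\cc A}(Y\wedge\bb G_m^{\wedge n})_f \longrightarrow \Omega_{\bb G_m} M_{\cc A}(Y\wedge\bb G_m^{\wedge n+1})_f
\]
are motivic weak equivalences of motivic $S^1$-spectra for every $n\geq 0$.

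Next, since both source and target are motivically fibrant motivic $S^1$-spectra, this equivalence is detected on Nisnevich sheaves of stable homotopy groups. Evaluating at $X\in Sm/k$, one gets for every $k\in\bb Z$ the comparison map
\[
\pi_k\bigl(M_{\cc A}(Y\wedge\bb G_m^{\wedge n})_f(X)\bigr) \longrightarrow \pi_k\bigl(M_{\cc A}(Y\wedge\bb G_m^{\wedge n+1})_f(X\wedge\bb G_m)\bigr),
\]
and the $\Omega_{\bb G_m}$-spectrum condition is precisely that all such maps are isomorphisms. Via the Dold--Kan correspondence, these homotopy groups of the fibrant replacement compute exactly the Nisnevich hypercohomology groups $H^{-k}_{\nis}(X, \cc A(-\times\Delta^\bullet, Y\wedge\bb G_m^{\wedge n})_{\nis})$, which with the cohomological shift convention of Definition~\ref{kogom} equal $H^{n-k,n}_{\cc A}(X,Y)$.

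Setting $p=n-k$ and $q=n$, the above comparison map becomes
\[
H^{p,q}_{\cc A}(X,Y) \longrightarrow H^{p+1,q+1}_{\cc A}(X\wedge\bb G_m, Y),
\]
and one checks by unwinding the constructions that this map is nothing else than $\beta^{p,q}$ induced by the bispectrum structure maps of $M_{\cc A}^{\bb G_m}(Y)$. Therefore the $\Omega_{\bb G_m}$-spectrum condition for $M_{\cc A}^{\bb G_m}(Y)_f$ coincides with the cancellation property for $H^{*,*}_{\cc A}(-,Y)$, which gives the desired equivalence.

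The main technical point, and the one requiring most care, is the bookkeeping in the last step: ensuring that the identification of homotopy groups with hypercohomology through Dold--Kan sends the adjoint structure map of the bispectrum to the map $\beta^{p,q}$ as defined in Definition~\ref{kogom}. Everything else is a straightforward application of Lemma~\ref{aspectra} together with the standard fact that weak equivalences between motivically fibrant objects are detected on homotopy sheaves.
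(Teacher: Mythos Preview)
Your proposal is correct and follows essentially the same approach as the paper: the paper's proof simply says ``Using Lemma~\ref{aspectra}, this is proved similar to~\cite[4.5]{GP2}'', and what you have written is precisely an unpacking of that reference --- apply Lemma~\ref{aspectra} levelwise to get motivic fibrancy of each $S^1$-spectrum, reduce to the $\Omega_{\bb G_m}$-condition, and identify the latter with the maps $\beta^{p,q}$ via hypercohomology.
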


\begin{proof}
Using Lemma~\ref{aspectra}, this is proved similar to~\cite[4.5]{GP2}.
\end{proof}

\begin{cor}\label{bruscor}
Suppose $\cc A$ is a strict $V$-category of correspondences satisfying the
cancellation property. Then the presheaves $H^{*,*}_{\cc A}(-,Y)$ are represented in $SH(k)$ by the
bispectrum $M_{\cc A}^{\bb G_m}(Y)_f$. Precisely,
   $$H^{p,q}_{\cc A}(X,Y)=SH(k)(X_+,S^{p,q}\wedge M_{\cc A}^{\bb G_m}(Y)_f),\quad p,q\in\bb Z,$$
where $S^{p,q}=S^{p-q}\wedge(\bb G_m,1)^{\wedge q}$.
\end{cor}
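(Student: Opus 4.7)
The plan is to combine Lemma~\ref{brus} with the standard representability of motivic cohomology theories by motivically fibrant bispectra. First, by Lemma~\ref{brus}, the cancellation hypothesis on $\cc A$ guarantees that $M_{\cc A}^{\bb G_m}(Y)_f$ is motivically fibrant as an ordinary motivic bispectrum. In particular its structure maps $M_{\cc A}(Y\wedge\bb G_m^{\wedge n})_f\to\Omega_{\bb G_m}M_{\cc A}(Y\wedge\bb G_m^{\wedge n+1})_f$ are sectionwise weak equivalences, and each level is itself a motivically fibrant $S^1$-spectrum.

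Next, I would invoke the general fact that for a motivically fibrant bispectrum $\cc E=(E_0,E_1,\ldots)$ and any $p,q\in\bb Z$, the $\bb G_m$-$\Omega$-spectrum property yields
$$SH(k)(X_+, S^{p,q}\wedge\cc E)\cong [X_+, S^{p-q}\wedge E_q]_{SH_{S^1}(k)}$$
for $q$ sufficiently large (and for all bidegrees by replacing space-level by spectrum-level homotopy groups and increasing the index). Applied to $\cc E=M_{\cc A}^{\bb G_m}(Y)_f$, this reduces the corollary to computing the homotopy groups of the sections of the Nisnevich-fibrant replacement $M_{\cc A}(Y\wedge\bb G_m^{\wedge q})_f$ at $X$.

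Finally I would match the result with Nisnevich hypercohomology. By construction $M_{\cc A}(Y\wedge\bb G_m^{\wedge q})_f$ is a fibrant Nisnevich local replacement of the simplicial $\cc A$-sheaf $n\mapsto\cc A(-\times\Delta^n,Y\wedge\bb G_m^{\wedge q})_{\nis}$; the Dold--Kan correspondence between simplicial sheaves and their associated chain complexes, together with Brown--Gersten type descent, then gives
$$\pi_{q-p}\bigl(M_{\cc A}(Y\wedge\bb G_m^{\wedge q})_f(X)\bigr)\cong H^{p-q}_{\nis}\bigl(X,\cc A(-\times\Delta^\bullet,Y\wedge\bb G_m^{\wedge q})_{\nis}\bigr)=H^{p,q}_{\cc A}(X,Y),$$
where the last equality is the definition in Definition~\ref{kogom} together with the cohomological shift by $[-q]$.

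The substantive step is motivic fibrancy of $M_{\cc A}^{\bb G_m}(Y)_f$, which is precisely what the cancellation property and Lemma~\ref{brus} supply; once fibrancy is in hand, the remaining identifications are formal consequences of how motivically fibrant bispectra represent cohomology theories and how Nisnevich-fibrant replacements of simplicial sheaves compute hypercohomology. The only point requiring care is bookkeeping of the bidegree $(p,q)$ and the passage from space-level to spectrum-level homotopy groups, but this is routine.
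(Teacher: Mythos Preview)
Your proposal is correct and matches the paper's approach: the paper states this as an immediate corollary of Lemma~\ref{brus} with no separate proof, the point being that motivic fibrancy of $M_{\cc A}^{\bb G_m}(Y)_f$ is the only nontrivial input, after which representability follows from the standard identification of $SH(k)$-maps into a fibrant bispectrum with Nisnevich hypercohomology of its levels. You have simply unpacked the routine steps the paper leaves implicit.
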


Under the assumptions of Corollary~\ref{bruscor} we can compute $\bb S^{\cc A}[\eta_{\cc A}^{-1}]$
up to an isomorphism in $D^{st}_{\bb A^1}(Sh(\cc A))$ as follows.
   $$\bb S^{\cc A}[\eta_{\cc A}^{-1}]\cong\hocolim_{DM_{\cc A}(k)}(M_{\cc A}^{\bb G_m}(pt)_f\xrightarrow{}\Omega_{(\bb G_m,1)}(M_{\cc A}^{\bb G_m}(pt)_f)
     \xrightarrow{}\Omega_{(\bb G_m,1)^{\wedge 2}}(M_{\cc A}^{\bb G_m}(pt)_f)\xrightarrow{}\cdots).$$
Here the maps of the colimit are induced by $\eta_{\cc A}$. Denote
the right hand side by $M_{\cc A}^{\bb G_m}(pt)_f[\eta^{-1}]$. It is
termwise a spectrum
   $$M_{\cc A}^{\bb G_m}(pt)_f[\eta^{-1}]=(\Omega_{(\bb G_m,1)}^\infty(M_{\cc A}(pt)_f),\Omega_{(\bb G_m,1)}^{\infty-1}(M_{\cc A}(pt)_f),\Omega_{(\bb G_m,1)}^{\infty-2}(M_{\cc A}(pt)_f),\ldots),$$
where each
   $$\Omega_{(\bb G_m,1)}^{\infty-n}(M_{\cc A}(pt)_f):=\hocolim_{DM_{\cc A}^{eff}(k)}(M_{\cc A}(\bb G_m^{\wedge n})_f\xrightarrow{}\Omega_{(\bb G_m,1)}(M_{\cc A}(\bb G_m^{\wedge n})_f)\to\cdots).$$

Since the structure maps of $M_{\cc A}^{\bb G_m}(pt)_f[\eta^{-1}]$
are schemewise equivalences by the cancellation property, it follows
from the construction of $M_{\cc A}^{\bb G_m}(pt)_f[\eta^{-1}]$ that
all homotopy sheaves $\pi_{i,j}^{\bb A^1}(M_{\cc A}^{\bb
G_m}(pt)_f[\eta^{-1}])$ are concentrated in weight zero only.
By~\cite[4.3.11]{MorICTP} the canonical map of sheaves
   $$\pi_n^{\bb A^1}(\Omega_{(\bb G_m,1)}(\Omega_{(\bb G_m,1)}^{\infty-n}(M_{\cc A}(pt)_f))\to\pi_n^{\bb A^1}(\Omega_{(\bb G_m,1)}^{\infty-n}(M_{\cc A}(pt)_f)_{-1},\quad n\in\bb Z,$$
is an isomorphism, hence the composite map of sheaves is an
isomorphism for all $n\geq 0$
   \begin{gather*}
     \beta_n:\pi_{-n,-n}^{\bb A^1}(M_{\cc A}^{\bb G_m}(pt)_f[\eta^{-1}])\to\pi_{-n-1,-n-1}^{\bb A^1}(\Omega_{(\bb G_m,1)}M_{\cc A}^{\bb G_m}(pt)_f[\eta^{-1}])\to\\
     \to(\pi_{-n-1,-n-1}^{\bb A^1}(M_{\cc A}^{\bb G_m}(pt)_f[\eta^{-1}]))_{-1},
   \end{gather*}
where the left map is induced by the structure map.

Denote by $\cc W^{\cc A}$ the strictly $\bb A^1$-invariant sheaf $\pi_{0,0}^{\bb A^1}(M_{\cc A}^{\bb G_m}(pt)_f[\eta^{-1}])$. If
we regard it as a complex concentrated in zeroth degree, then the collection of complexes
   $$\cc W^{\cc A}_{\bb G_m}:=(\cc W^{\cc A},\cc W^{\cc A},\cc W^{\cc A},\ldots)$$
together with isomorphisms $\beta_0:\cc W^{\cc A}\to({\cc W}^{\cc A})_{-1}$ is an object of $DM_{\cc A}(k)$,
which is $\bb A^1$-local as an ordinary bispectrum (after taking the Eilenberg--Mac~Lane spectrum of each sheaf $\cc W^{\cc A}$).
Notice that the homotopy module of $\cc W^{\cc A}_{\bb G_m}$ in the sense of~\cite[5.2.4]{Mor0} is given by $(M_*,\mu_*)$
with each $M_n=\cc W^{\cc A}$ and $\mu_n=\beta_0$, $n\in\bb Z$. There is a canonical morphism of spectra
   \begin{equation}\label{spor}
    H:M_{\cc A}^{\bb G_m}(pt)_f[\eta^{-1}]\to\cc W^{\cc A}_{\bb G_m},
   \end{equation}
induced by taking the zeroth homology sheaf of each complex $\Omega_{(\bb G_m,1)}^{\infty-n}(M_{\cc A}(pt)_f)$.

Let $\underline{W}$ be the Nisnevich sheaf of Witt rings on $Sm/k$. Following~\cite[p.~380]{ALP}
we take the model $\underline{W}:=\underline{K}_0^{MW}/h$.
The isomorphism $\underline{W}\cong\pi_{n>0,n>0}^{\bb A^1}(\bb S)$ gives the canonical isomorphism of sheaves
$\epsilon:\underline{W}\cong\underline{\Hom}((\bb G_m,1),\underline{W})$. More precisely, it takes $w\in\underline{W}(U)$ to
$p_1^*(\eta\cdot[t])\cdot p_2^*(w)\in\underline{W}(U\wedge(\bb G_m,1))$, where $t$ is the canonical unit on
$\bb G_m$ and $[t]\in\underline{K}_1^{MW}(\bb G_m)$ the corresponding section.

\begin{defs}\label{pervy}{\rm
Suppose $\cc A$ is a strict $V$-category of correspondences satisfying the
cancellation property and $R$ a flat $\bb Z$-algebra. We say that the spectrum $\bb S^{\cc A}[\eta_{\cc A}^{-1}]$
is {\it of Witt type with $R$-coefficients\/} if the zeroth cohomology sheaf $\cc W^{\cc A}_{R}=\pi_{0,0}^{\bb A^1}(M_{\cc A}^{\bb G_m}(pt)_f[\eta^{-1}])\otimes R$
of the complex $\Omega_{(\bb G_m,1)}^{\infty}(M_{\cc A}(pt)_f)\otimes R$ is the only non-zero cohomology sheaf
(the other cohomology sheaves are required to be zero) and $\cc W^{\cc A}_{R}$ is isomorphic to the Nisnevich sheaf $\underline{W}_{R}=\underline{W}\otimes{R}$.
We also require the diagram
   $$\xymatrix{\cc W^{\cc A}_{R}\ar[r]^(.4){\beta_0}\ar[d]_{\cong}&(\cc W^{\cc A}_{R})_{-1}\ar[d]^{\cong}\\
               \underline{W}_{R}\ar[r]_(.4){\epsilon}&(\underline{W}_{R})_{-1}}$$
to be commutative. If $R=\bb Z$ then we just say that $\bb S^{\cc A}[\eta_{\cc A}^{-1}]$ is {\it of Witt type}.

}\end{defs}

\begin{lem}\label{mkl}
Suppose $\cc A$ is a strict $V$-category of correspondences satisfying the
cancellation property and $R$ a ring of fractions of $\bb Z$. If the spectrum $\bb S^{\cc A}[\eta_{\cc A}^{-1}]$
is of Witt type with $R$-coefficients then it is isomorphic in $SH(k)$ to the bispectrum
   $$\underline{W}_R^{\bb G_m}:=(\underline{W}_R,\underline{W}_R,\ldots),$$
in which every structure map is induced by $\epsilon$.
\end{lem}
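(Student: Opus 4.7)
The plan is to exhibit the isomorphism through the canonical morphism $H$ of (\ref{spor}), tensored with $R$. After reduction to a convenient model, the argument is that $H\otimes R$ is a levelwise quasi-isomorphism and that its target coincides with $\underline{W}_R^{\bb G_m}$.

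Recall that $\bb S^{\cc A}[\eta_{\cc A}^{-1}]$ is represented in $SH(k)$ (via Eilenberg--Mac~Lane spectra) by $M_{\cc A}^{\bb G_m}(pt)_f[\eta^{-1}]$, whose $n$-th entry is $\Omega_{(\bb G_m,1)}^{\infty-n}(M_{\cc A}(pt)_f)$ and whose structure maps are schemewise equivalences by the cancellation property. The map $H$ sends each entry to the constant bispectrum $\cc W^{\cc A}_{\bb G_m}$ via the zeroth homology sheaf.

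The first step is to verify that $H\otimes R$ is a levelwise quasi-isomorphism. At level zero this is immediate from the Witt type hypothesis: the complex $\Omega_{(\bb G_m,1)}^\infty(M_{\cc A}(pt)_f)\otimes R$ has cohomology concentrated in degree zero and equal to $\cc W^{\cc A}_R$. To pass to higher levels I would observe that both source and target become $\Omega$-spectra after $\eta$-inversion (the source by the cancellation property and \cite[4.3.11]{MorICTP}, the target trivially via the isomorphisms $\beta_0$) and that $H$ is compatible with the structure maps. Hence a level-zero equivalence propagates to every level. Since $R$ is a ring of fractions of $\bb Z$, the category $\cc A_R$ is again a strict $V$-category of correspondences, so Lemma \ref{brus} ensures that the resulting levelwise equivalence is a genuine stable motivic equivalence of bispectra in $SH(k)$.

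The second step is to identify $\cc W^{\cc A}_{R,\bb G_m}$ with $\underline{W}_R^{\bb G_m}$. The Witt type hypothesis supplies an isomorphism of Nisnevich sheaves $\cc W^{\cc A}_R\cong\underline{W}_R$, and the commutative square in Definition \ref{pervy} asserts precisely that this isomorphism intertwines the structure maps $\beta_0$ of $\cc W^{\cc A}_{R,\bb G_m}$ with the structure maps $\epsilon$ of $\underline{W}_R^{\bb G_m}$. Composing with the first step yields the desired isomorphism in $SH(k)$. The principal obstacle is the propagation step, namely that a level-zero equivalence between the two bispectra forces equivalences at every level; this rests on the cancellation property combined with Morel's contraction formula, after which the rest of the argument is purely formal.
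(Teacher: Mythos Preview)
Your proposal is correct and follows essentially the same route as the paper: use the canonical morphism $H$ of~\eqref{spor} (tensored with $R$) to pass from $\bb S^{\cc A}[\eta_{\cc A}^{-1}]$ to $\cc W^{\cc A}_{R,\bb G_m}$, and then invoke the commutative square of Definition~\ref{pervy} to identify the latter with $\underline{W}_R^{\bb G_m}$. The paper's own proof is just the terse version of this, stating that the lemma ``immediately follows from Definition~\ref{pervy} and the observation that the morphism of spectra~\eqref{spor} is a motivic equivalence''; you have merely unpacked why $H\otimes R$ is an equivalence.

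One small remark on your propagation step: being an $\Omega$-spectrum in the $\bb G_m$-direction is not by itself enough to push a level-zero equivalence to all levels, since $\Omega_{(\bb G_m,1)}$ is not conservative. What actually makes it work here is that after $\eta$-inversion the successive levels of $M_{\cc A}^{\bb G_m}(pt)_f[\eta^{-1}]$ are mutually equivalent via the $\eta$-maps (this is the content of the paragraph preceding~\eqref{spor}, where the isomorphisms $\beta_n$ are established), so the Witt-type hypothesis at level zero transports to every level directly. With that clarification your argument is complete.
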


\begin{proof}
This immediately follows from Definition~\ref{pervy} and the observation that the morphism of spectra~\eqref{spor}
is a motivic equivalence.
\end{proof}

We are now in a position to prove the main result of the section.

\begin{thm}\label{ratsph}
Suppose $\cc A$ is a strict $V$-category of correspondences satisfying the
cancellation property.

$(1)$ If the spectrum $\bb S^{\cc A}[\eta_{\cc A}^{-1}]$
is of Witt type with $\bb Q$-coefficients then the canonical morphism
   $$\bb S_{-}\otimes\bb Q\to\bb S^{\cc A}_{-}\otimes\bb Q$$
is an isomorphism in $SH(k)$.

$(2)$ If the spectrum $\bb S^{\cc A}[\eta_{\cc A}^{-1}]$
is of Witt type with $\bb Z\left[\frac 12\right]$-coefficients then the canonical morphism
   $$H_{\bb A^1}\bb Z_{-}\to\bb S^{\cc A}_{-}$$
is an isomorphism in $SH(k)$.
\end{thm}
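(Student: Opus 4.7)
The plan is to identify both sides of each canonical morphism with a common Witt-type bispectrum and then to verify that the canonical morphism is an isomorphism under these identifications.

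For part~(1), the first step is to pass to $\eta$-inverted objects. As recalled earlier in the paper, $\bb S_{-}\cong\bb S[\eta^{-1},2^{-1}]$ and $\bb S^{\cc A}_{-}\cong\bb S^{\cc A}[\eta_{\cc A}^{-1},2^{-1}]$, so after tensoring with $\bb Q$ the canonical morphism of the statement becomes a morphism
\[
\bb S[\eta^{-1}]\otimes\bb Q\longrightarrow\bb S^{\cc A}[\eta_{\cc A}^{-1}]\otimes\bb Q.
\]
Lemma~\ref{mkl}, applied with $R=\bb Q$, identifies the target with the Witt-bispectrum $\underline{W}_{\bb Q}^{\bb G_m}$. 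The Ananyevskiy--Levine--Panin theorem~\cite{ALP}, which computes $SH^{-}(k)_{\bb Q}$ as the category of rational Witt motives, identifies $\bb S[\eta^{-1}]\otimes\bb Q$ with the same bispectrum $\underline{W}_{\bb Q}^{\bb G_m}$, via the map induced by taking the zeroth $\bb A^{1}$-local homotopy sheaf.

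Part~(2) is treated analogously, but working in $D_{\bb A^{1}}(k)[2^{-1}]$ (which is where $H_{\bb A^{1}}\bb Z_{-}$ naturally lives) with $\bb Z[1/2]$-coefficients in place of $\bb Q$-coefficients. Lemma~\ref{mkl} identifies $\bb S^{\cc A}[\eta_{\cc A}^{-1}]$ with $\underline{W}_{\bb Z[1/2]}^{\bb G_m}$, and an analogous identification of $H_{\bb A^{1}}\bb Z_{-}$, obtained by combining the Cisinski--D\'eglise results~\cite{CD} with the $\bb A^{1}$-derived variant of the ALP computation, yields the same target.

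The main obstacle, and the step I expect to be the technical heart of the argument, is the compatibility check: one must verify that the canonical morphism corresponds, under the chosen identifications with $\underline{W}_R^{\bb G_m}$, to the identity. Both identifications proceed by taking the zeroth $\bb A^{1}$-local homotopy sheaf and then invoking the isomorphism $\cc W^{\cc A}_R\cong\underline{W}_R$ from Definition~\ref{pervy}. The commutativity of the square in Definition~\ref{pervy} ensures that the structure maps on the two bispectra coincide (both being induced by $\epsilon$), while the naturality of $\pi^{\bb A^{1}}_{0,0}$ along $F$, together with the relation $F(\eta)=\eta_{\cc A}$, ensures that the canonical morphism intertwines the two identifications. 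Unwinding these compatibilities should conclude the argument in both cases.
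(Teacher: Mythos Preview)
Your proposal is essentially correct and close to the paper's argument, but the paper organizes the logic more efficiently and thereby avoids the compatibility check you flag as the ``technical heart''. Rather than identifying source and target \emph{separately} with $\underline{W}_R^{\bb G_m}$ and then verifying that the canonical morphism becomes the identity, the paper writes a single composite
\[
\bb S_{-}\otimes\bb Q\longrightarrow\bb S^{\cc A}_{-}\otimes\bb Q\longrightarrow\underline{W}_{\bb Q}^{\bb G_m}
\]
and invokes \cite{ALP} to conclude that the composite is an isomorphism in $SH(k)$; since the right arrow is an isomorphism by Lemma~\ref{mkl}, the left arrow is an isomorphism by the two-out-of-three property. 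The compatibility you worry about is thus absorbed into the statement that the ALP map factors through $\bb S^{\cc A}_{-}\otimes\bb Q$, which is automatic because the map in \cite{ALP} is obtained by passing to the zeroth $\bb A^1$-homotopy sheaf, and this commutes with the functor $F$.

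For part~(2) your citation is off: the identification of $H_{\bb A^1}\bb Z_{-}$ with $\underline{W}^{\bb G_m}[2^{-1}]$ is not a combination of \cite{CD} with an $\bb A^1$-derived variant of ALP, but is rather Bachmann's result \cite[Proposition~37]{Bach}. The paper again runs the two-out-of-three argument on the composite $H_{\bb A^1}\bb Z_{-}\to\bb S^{\cc A}_{-}\to\underline{W}^{\bb G_m}[2^{-1}]$.
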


\begin{proof}
(1). It follows from~\cite{ALP} that the composite morphism
   $$\bb S_{-}\otimes\bb Q\to\bb S^{\cc A}_{-}\otimes\bb Q\to\underline{W}_{\bb Q}^{\bb G_m}$$
is an isomorphism in $SH(k)$. By Lemma~\ref{mkl} the right morphism is an isomorphism in
$SH(k)$, and hence so is the left one.

(2). It follows from~\cite[Proposition~37]{Bach} that the composite
morphism
   $$H_{\bb A^1}\bb Z_{-}\to\bb S^{\cc A}_{-}\to\underline{W}^{\bb G_m}[2^{-1}]$$
is an isomorphism in $SH(k)$. By Lemma~\ref{mkl} the right morphism is an isomorphism in
$SH(k)$, and hence so is the left one.
\end{proof}

\section{The Milnor--Witt sphere spectrum $\bb S^{MW}$}

Throughout this section $k$ is an infinite perfect field with $\chr
k\not= 2$. We refer the reader to~\cite{CF} for basic facts and
definitions on the category of finite Milnor--Witt correspondences
$\corrt$. It is a strict $V$-category of correspondences
by~\cite{DF}. It follows from~\cite{FO} that $\corrt$ has
cancellation property. We denote the additive sphere spectrum
associated with $\corrt$ by $\bb S^{MW}$.

By~\cite[5.11]{CF} $\corrt(-,Y)$ is a Zariski sheaf, but not a
Nisnevich sheaf in general~\cite[5.12]{CF}. However, $\corrt(-,pt)$
is the Nisnevich sheaf $\underline{K}_0^{MW}$~\cite[4.5]{CF}, which
is homotopy invariant by~\cite[11.3.3]{F}. Since $\corrt$ is a
strict additive $V$-category of correspondences by~\cite{DF}, we see
that the Nisnevich sheaf $\corrt(-,pt)$ is strictly homotopy
invariant. In particular, the normalised complex $M_{MW}(pt)$
associated to the simplicial sheaf
$\corrt(-\times\Delta^\bullet,pt)$ has only one non-trivial homology
sheaf $\underline{K}_0^{MW}$. It follows from~\cite[5.34]{CF} that
$\pi_0^{\bb A^1}(\Hom((\bb G_m,1)^{\wedge n},M_{MW}(pt)))$ is
isomorphic to the sheaf $\underline{W}$ of Witt rings. Thus the
spectrum $\bb S^{MW}[\eta^{-1}]$ is of Witt type.
Theorem~\ref{ratsph} now implies the following

\begin{prop}\label{label}
The canonical morphisms
   $$\bb S_{-}\otimes\bb Q\to\bb S^{MW}_{-}\otimes\bb Q\quad{\textrm{and}}\quad H_{\bb A^1}\bb Z_{-}\to\bb S^{MW}_{-}$$
are isomorphisms in $SH(k)$.
\end{prop}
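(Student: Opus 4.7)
The strategy is completely direct: I would apply Theorem \ref{ratsph} to the category $\cc A = \corrt$. The entire preceding discussion in the section is set up exactly so that the hypotheses of Theorem \ref{ratsph} are available, and in fact verified over $\bb Z$ (and hence automatically over $\bb Q$ and over $\bb Z[\frac12]$). So the work is to check that those hypotheses really do hold, and then to invoke the theorem twice.

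First I would record the structural hypotheses: $\corrt$ is a strict $V$-category of correspondences (by \cite{DF}) and satisfies the cancellation property (by \cite{FO}). Next I would check that $\bb S^{MW}[\eta^{-1}]$ is of Witt type in the sense of Definition \ref{pervy}. The relevant facts are that $\corrt(-,pt) = \underline{K}_0^{MW}$ is a strictly $\bb A^1$-invariant Nisnevich sheaf (by \cite[4.5]{CF}, \cite[11.3.3]{F}, and the strict $V$-property), so $M_{MW}(pt)$ has a unique non-trivial homology sheaf, and after inverting $\eta$ combined with the cancellation property this degree-zero concentration is propagated to the whole bispectrum $M_{MW}^{\bb G_m}(pt)_f[\eta^{-1}]$. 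The identification of $\cc W^{MW}$ with $\underline{W}$ comes from \cite[5.34]{CF}, which computes $\pi_0^{\bb A^1}(\underline{\Hom}((\bb G_m,1)^{\wedge n}, M_{MW}(pt)))$ as the Witt sheaf.

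The one subtle point I expect, and the main obstacle, is verifying the commutativity of the diagram in Definition \ref{pervy} relating the structure map $\beta_0$ with the map $\epsilon$ defined by multiplication with $\eta \cdot [t]$. Since $\eta_{MW} = F(\eta)$ is the image of the stable Hopf map under the functor $F$, and the isomorphism $\underline{W} \cong \pi_{n>0,n>0}^{\bb A^1}(\bb S)$ is the standard one, this should reduce to matching up the Milnor–Witt computation of \cite[5.34]{CF} with the description of $\epsilon$ via $\eta \cdot [t]$; I would trace through the identifications at the level of sections on Henselian local schemes to see that both maps agree.

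Once $\bb S^{MW}[\eta^{-1}]$ is confirmed to be of Witt type (over $\bb Z$), it is of Witt type with both $\bb Q$- and $\bb Z[\frac12]$-coefficients. Part (1) of Theorem \ref{ratsph} then yields the first isomorphism $\bb S_{-}\otimes\bb Q\to\bb S^{MW}_{-}\otimes\bb Q$, and part (2) yields the second isomorphism $H_{\bb A^1}\bb Z_{-}\to\bb S^{MW}_{-}$.
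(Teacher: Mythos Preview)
Your proposal is correct and follows essentially the same approach as the paper: verify that $\corrt$ is a strict $V$-category with cancellation, check that $\bb S^{MW}[\eta^{-1}]$ is of Witt type via the computations \cite[4.5, 5.34]{CF} and \cite[11.3.3]{F}, and then invoke both parts of Theorem~\ref{ratsph}. In fact you are slightly more careful than the paper, which asserts ``Thus the spectrum $\bb S^{MW}[\eta^{-1}]$ is of Witt type'' without explicitly addressing the commutativity of the $\beta_0$/$\epsilon$ square from Definition~\ref{pervy} that you correctly flag as the one point requiring a moment's thought.
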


It follows from properties of finite Milnor--Witt
correspondences~\cite{CF} that $\bb S_+^{MW}$ is isomorphic to $\bb
S^{\corr}[2^{-1}]$ in $DM_{MW}(k)[2^{-1}]:=DM_{\corrt}(k)[2^{-1}]$. Thus we have a splitting
   $$\bb S^{MW}[2^{-1}]\cong\bb S^{\corr}[2^{-1}]\oplus\bb S_-^{MW}.$$

A theorem of Cisinski--D\'eglise~\cite[16.2.13]{CD} shows that the
canonical map $\bb S_+\otimes{\bb Q}\to\bb S^{\corr}\otimes\bb Q$ is
an isomorphism in $SH(k)$. Combining this with
Proposition~\ref{label}, we have proved the main result of the
section:

\begin{thm}\label{ratsphcw}
Given an infinite perfect field $k$ of characteristic not 2, the canonical morphism of bispectra
   $$\bb S\otimes\bb Q\to\bb S^{MW}\otimes\bb Q$$
is an isomorphism in $SH(k)$.
\end{thm}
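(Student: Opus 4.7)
The plan is to use the plus/minus decomposition available after inverting $2$ (in particular rationally), show that the canonical map $\bb S \to \bb S^{MW}$ respects this decomposition, and then identify each component separately using the two ingredients already established in the paper.

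First I would rationalize both sides. Since $\bb Q$ contains $\bb Z[\frac12]$, the decompositions
\[
\bb S \otimes \bb Q = (\bb S_+ \oplus \bb S_-) \otimes \bb Q, \qquad
\bb S^{MW} \otimes \bb Q \cong (\bb S^{\corr} \oplus \bb S^{MW}_-) \otimes \bb Q
\]
hold in $SH(k)$, where the second identification uses the splitting $\bb S^{MW}[2^{-1}] \cong \bb S^{\corr}[2^{-1}] \oplus \bb S^{MW}_-$ recorded just before the theorem. The canonical map $\bb S \otimes \bb Q \to \bb S^{MW} \otimes \bb Q$ is induced by the symmetric monoidal functor $F$ of Section~3 and hence commutes with the idempotents $\epsilon_{\pm}$ (since these are constructed from the twist automorphism of $(\bb G_m,1)^{\wedge 2}$, which $F$ preserves). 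Thus the map respects the direct sum decomposition and it suffices to check it is an isomorphism on the plus summand and on the minus summand separately.

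On the plus part, the composite $\bb S_+ \otimes \bb Q \to \bb S^{MW}_+ \otimes \bb Q \cong \bb S^{\corr} \otimes \bb Q$ is exactly the canonical rational map from the motivic sphere to the Voevodsky motive of a point, which is an isomorphism in $SH(k)$ by the Cisinski--D\'eglise theorem~\cite[16.2.13]{CD} cited right above the statement. On the minus part, $\bb S_- \otimes \bb Q \to \bb S^{MW}_- \otimes \bb Q$ is an isomorphism by the first assertion of Proposition~\ref{label}, which in turn relied on the fact (verified in the paragraph preceding Proposition~\ref{label}) that $\bb S^{MW}[\eta^{-1}]$ is of Witt type, together with the Ananyevskiy--Levine--Panin computation~\cite{ALP}.

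Putting the two cases together yields the desired isomorphism. The only genuine work is organising the two inputs, so there is no real obstacle here; the small subtlety to verify is the compatibility of $F$ with the $\pm$-idempotents, which however is immediate because $F$ is a symmetric monoidal triangulated functor and sends $\tw$ on $(\bb G_m,1)^{\wedge 2}$ to the corresponding twist on $(\bb G_m^{\corrt})^{\wt{\otimes} 2}$, and hence sends $\epsilon$ to $\epsilon_{\corrt}$.
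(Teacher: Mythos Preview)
Your proposal is correct and follows essentially the same route as the paper: split rationally into plus and minus parts, identify the plus part via Cisinski--D\'eglise~\cite[16.2.13]{CD} (using $\bb S^{MW}_+\cong\bb S^{\corr}[2^{-1}]$) and the minus part via Proposition~\ref{label}. Your extra remark on why $F$ intertwines the idempotents $\epsilon_\pm$ is a welcome clarification of a point the paper leaves implicit.
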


Let $H_{\bb A^1}^{*,*}(X)$ be the cohomology theory represented in
$SH(k)$ by the bispectrum $H_{\bb A^1}\bb Z$. The following statement is a consequence of
the preceding theorem and a result of D\'eglise--Fasel~\cite[4.2.6]{DF}:

\begin{cor}\label{ratsphcwcor}
Given an infinite perfect field $k$ of characteristic not 2, $n\geq 0$ and $X\in Sm/k$,
there is a natural isomorphism
   $$H_{\bb A^1}^{2n,n}(X)\otimes\bb Q\cong\wt{CH}{}^n(X)\otimes\bb Q,$$
where the right hand side is the $n$-th rational Chow--Witt group of $X$. In 
particular, if $-1$ is a sum of squares in $k$, then 
  $H_{\bb A^1}^{2n,n}(X)\otimes\bb Q\cong{CH}{}^n(X)\otimes\bb Q,$
where ${CH}{}^n(X)\otimes\bb Q$ is the $n$-th rational Chow group of $X$.
\end{cor}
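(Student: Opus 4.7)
The strategy is to combine Theorem~\ref{ratsphcw} with the identification of MW-motivic cohomology in bidegree $(2n,n)$ with the Chow--Witt group from~\cite[4.2.6]{DF}, together with a rational Witt vanishing argument for the parenthetical claim.

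The first step is to identify $H^{2n,n}_{\bb A^1}(X)\otimes\bb Q$ with the MW-motivic cohomology $H^{2n,n}_{MW}(X)\otimes\bb Q$ represented in $SH(k)$ by $\bb S^{MW}$. By construction $H_{\bb A^1}^{p,q}(X)$ is represented by $H_{\bb A^1}\bb Z$, the image of $\bb S$ under the canonical functor $SH(k)\to D_{\bb A^1}(k)$. The canonical functor $SH(k)\to DM_{MW}(k)$ sends $\bb S$ to $\bb S^{MW}$, and the composite $SH(k)\to DM_{MW}(k)\to D_{\bb A^1}(k)$ coincides with the canonical functor, so that the MW-sphere forgets to $H_{\bb A^1}\bb Z$. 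Theorem~\ref{ratsphcw} yields the unit isomorphism $\bb S\otimes\bb Q\cong\bb S^{MW}\otimes\bb Q$ in $SH(k)$, and by the usual adjunction formalism this upgrades to an isomorphism of the two bigraded cohomology theories rationally, producing the desired identification.

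I would then invoke D\'eglise--Fasel~\cite[4.2.6]{DF}, which supplies the natural isomorphism $H^{2n,n}_{MW}(X)\cong\wt{CH}{}^n(X)$; combining with the previous step yields the main claim. For the parenthetical statement, when $-1$ is a sum of squares in $k$, Pfister's theorem shows that the Witt ring $W(k)$ is torsion, hence $W(k)\otimes\bb Q=0$. Sheafwise this forces the Witt contribution to the Chow--Witt group to vanish rationally, and the natural surjection $\wt{CH}{}^n(X)\twoheadrightarrow CH^n(X)$ becomes an isomorphism after $\otimes\bb Q$.

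The main obstacle lies in the first step, namely promoting Theorem~\ref{ratsphcw}'s rational isomorphism of unit spectra to an isomorphism of bigraded cohomology theories $H^{*,*}_{\bb A^1}(-)\otimes\bb Q$ and $H^{*,*}_{MW}(-)\otimes\bb Q$. This requires careful bookkeeping with the two adjunctions $SH(k)\rightleftarrows DM_{MW}(k)$ and $SH(k)\rightleftarrows D_{\bb A^1}(k)$, together with their compatibility via the forgetful functor $DM_{MW}(k)\to D_{\bb A^1}(k)$, so that the rational isomorphism of unit spectra transports to a natural isomorphism of the Hom-groups computing the respective cohomology theories.
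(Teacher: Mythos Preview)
Your strategy matches the paper's: invoke Theorem~\ref{ratsphcw} to identify the rational cohomology theory represented by the sphere with the one represented by $\bb S^{MW}$, and then cite~\cite[4.2.6]{DF} for the identification $H^{2n,n}_{MW}(X)\cong\wt{CH}{}^n(X)$. The Pfister argument for the parenthetical claim is also the intended one.

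There is, however, one genuine slip in your bookkeeping. You write that the composite $SH(k)\to DM_{MW}(k)\to D_{\bb A^1}(k)$ is the canonical functor and that ``the MW-sphere forgets to $H_{\bb A^1}\bb Z$''. This is false: the forgetful (right adjoint) functor $DM_{MW}(k)\to D_{\bb A^1}(k)$ sends $\bb S^{MW}$ to the bispectrum built from $\corrt(-,pt)_{\nis}=\underline K_0^{MW}$ and its twists, not to $H_{\bb A^1}\bb Z$, whose entries are built from the constant sheaf $\bb Z$. The factorisation goes the other way: the functor $\cc A_{naive}\to\corrt$ induces a left adjoint $D_{\bb A^1}(k)\to DM_{MW}(k)$ sending $H_{\bb A^1}\bb Z$ to $\bb S^{MW}$, and the canonical map $\bb S\to\bb S^{MW}$ in $SH(k)$ factors as $\bb S\to H_{\bb A^1}\bb Z\to\bb S^{MW}$ (this is exactly what the paper records when it says that $F$ factors through $D_{\bb A^1}(k)$). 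To conclude that $H_{\bb A^1}\bb Z\otimes\bb Q\to\bb S^{MW}\otimes\bb Q$ is an isomorphism you therefore need, in addition to Theorem~\ref{ratsphcw}, the standard fact that $\bb S\otimes\bb Q\to H_{\bb A^1}\bb Z\otimes\bb Q$ is already an isomorphism (Morel; this is the equivalence $SH(k)_{\bb Q}\simeq D_{\bb A^1}(k)_{\bb Q}$ used in the proof of Theorem~\ref{reconstrshq}). Once you insert that step, your argument is complete and coincides with the paper's.

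One small refinement for the parenthetical: you correctly note that Pfister gives $W(k)\otimes\bb Q=0$, but to kill the Witt contribution to $\wt{CH}{}^n(X)$ you need this for the Witt \emph{sheaf}, i.e.\ for all local $k$-algebras appearing. Since $-1$ remains a sum of squares in any $k$-algebra, Pfister applies sheafwise and $\underline W\otimes\bb Q=0$; your phrase ``sheafwise'' suggests you already have this in mind, so just make it explicit.
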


\section{Reconstructing $SH(k)_{\bb Q}$ from finite Milnor--Witt correspondences}

In this section we prove the main result of the paper stating that
$SH(k)_{\bb Q}$ is recovered as $DM_{MW}(k)_{\bb Q}$ whenever the
base field $k$ is infinite perfect of characteristic not 2. To this
end, we need to extend R\"ondigs--{\O}stv{\ae}r's theorem~\cite{RO}
to preadditive categories of correspondences. Throughout this
section $\cc A$ is a category of correspondences.

Following~\cite[Section~2]{RO} define the category $\bb M^{\cc A}$
of motivic spaces with $\cc A$-cor\-respondences as all
contravariant additive functors from $\cc A$ to simplicial abelian
groups. A scheme $U$ in $Sm/k$ defines a representable motivic space
$\cc A(-,U)\in \bb M^{\cc A}$. Let $\cc U:\bb M^{\cc A}\to\bb M$
denote the evident forgetful functor induced by the graph
$Sm/k\to\cc A$. It has a left adjoint $\bb Z^{\cc A}:\bb M\to\bb
M^{\cc A}$ defined as the left Kan extension functor determined by
   $$\bb Z^{\cc A}((U\times\Delta^n)_+)=\cc A(-,U)\otimes\bb Z(\Delta^n).$$
If $X$ is a motivic space, let $X^{\cc A}$ be short for $\bb Z^{\cc A}(X)$.

Similar to~\cite[\S2.1]{RO} we define a projective motivic model category structure on $\bb M^{\cc A}$. This model category
is denoted by $\bb M^{\cc A}_{\mot}$. The projective motivic model category of motivic spaces is denoted by $\bb M_{\mot}$.
We have a Quillen pair
   $$\bb Z^{\cc A}:\bb M_{\mot}\rightleftarrows \bb M^{\cc A}_{\mot}:\cc U.$$
Using Definition~\ref{gret}(1) and Lemma~\ref{datsyuk}, the proof of
the following lemma literally repeats~\cite[Lemma~9]{RO}.

\begin{lem}\label{roenostv}
A map between motivic spaces with $\cc A$-correspondences is a motivic weak equivalence in $\bb M_{\mot}^{\cc A}$
if and only if it is so when considered as a map between ordinary motivic spaces.
\end{lem}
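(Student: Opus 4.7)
The plan is to transcribe the argument of \cite[Lemma~9]{RO} to the present generality. Both $\bb M_{\mot}$ and $\bb M^{\cc A}_{\mot}$ are defined as left Bousfield localizations of the respective schemewise projective model structures at two classes of maps: those enforcing Nisnevich descent for elementary Nisnevich squares, and the $\bb A^1$-projections $(U\times\bb A^1)_+\to U_+$ in $\bb M$ respectively $\cc A(-,U\times\bb A^1)\to \cc A(-,U)$ in $\bb M^{\cc A}$. By Definition~\ref{tak}(1) both families are indexed by the same set $Sm/k$, so the adjunction $\bb Z^{\cc A}\dashv \cc U$ carries the two classes of generating localizing maps to one another on representable generators.

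First I would verify that $\bb Z^{\cc A}\dashv\cc U$ remains a Quillen adjunction between the motivic model structures. The map $\bb Z^{\cc A}((U\times\bb A^1)_+)\to\bb Z^{\cc A}(U_+)$ is precisely the $\bb A^1$-projection $\cc A(-,U\times\bb A^1)\to\cc A(-,U)$ that is inverted on the target side, while the Nisnevich-descent maps are sent by $\bb Z^{\cc A}$ to their $\cc A$-analogues, which are motivic weak equivalences by Definition~\ref{tak}(3). Hirschhorn's localization criterion then shows $\bb Z^{\cc A}$ remains left Quillen.

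Next I would record two properties of $\cc U$: it preserves and reflects schemewise weak equivalences (since the forgetful functor from simplicial abelian groups to pointed simplicial sets does), and it sends motivically fibrant objects to motivically fibrant objects. The second holds because motivic fibrancy of $Z$ amounts to three conditions---sectionwise Kan-ness, Mayer--Vietoris descent for elementary Nisnevich squares, and $\bb A^1$-invariance of sections---each depending only on the underlying pointed simplicial sets of the sections indexed by $Sm/k$, and hence preserved verbatim by $\cc U$ in view of Definition~\ref{tak}(1).

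The conclusion will then come from a fibrant-replacement argument. I would choose functorial motivic fibrant replacements $R^{\cc A}$ on $\bb M^{\cc A}_{\mot}$ and $R$ on $\bb M_{\mot}$, built via the small object argument using the Nisnevich-descent construction, the $\bb A^1$-resolution $F\mapsto F(-\times\Delta^\bullet)$, and sectionwise Kan replacement; each step commutes with $\cc U$ because it is assembled from filtered colimits and pointwise operations that $\cc U$ preserves. This produces a natural schemewise equivalence $\cc U\circ R^{\cc A}\simeq R\circ\cc U$, whence $f$ is a motivic weak equivalence in $\bb M^{\cc A}_{\mot}$ iff $R^{\cc A}(f)$ is a schemewise weak equivalence iff $R(\cc U(f))$ is one iff $\cc U(f)$ is a motivic weak equivalence in $\bb M_{\mot}$. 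The hardest part will be choosing these fibrant-replacement constructions compatibly so that $\cc U\circ R^{\cc A}\simeq R\circ\cc U$ becomes manifest: Lemma~\ref{datsyuk} is exactly what makes the $\bb A^1$-resolution commute with $\cc U$, and Definition~\ref{tak}(3) is what makes the Nisnevich-descent construction commute with $\cc U$.
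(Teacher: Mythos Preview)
Your proposal is correct and takes essentially the same approach as the paper, which simply records that the proof literally repeats \cite[Lemma~9]{RO} once one has Lemma~\ref{datsyuk} and the defining properties of a category of correspondences in hand. You have correctly identified the two substantive inputs---Definition~\ref{tak}(3) for the Nisnevich-descent generators and Lemma~\ref{datsyuk} for the $\bb A^1$-projection generators---that allow the fibrant-replacement argument of R\"ondigs--{\O}stv{\ae}r to go through verbatim.
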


Let $\iota:pt=\spec k\to\bb G_m$ be the embedding $\iota(pt)=1\in\bb G_m$. The mapping cylinder
yields a factorization of the induced map
   $$\spec k_+\hookrightarrow\Cyl(\iota)\lra{\simeq}(\bb G_{m})_+$$
into a projective cofibration and a simplicial homotopy equivalence in
$\bb M$. Let $\bb G$ denote the cofibrant pointed presheaf $\Cyl(\iota)/\spec k_+$ and $T:=S^1\wedge\bb G$.

Following~\cite[\S2.4]{RO} we define a motivic spectrum
   \begin{equation*}\label{hasp}
    H\cc A=(\cc U(pt_+),\cc U(T^{\cc A}),\cc U((T^{\wedge2})^{\cc A}),\ldots).
   \end{equation*}
The structure maps are induced by morphisms
   $$\alpha_T:(T^{\wedge n})^{\cc A}\to\underline{\Hom}_{\bb M^{\cc A}}(T^{\cc A},(T^{\wedge n+1})^{\cc A})$$
(recall that $Sm/k$ acts on $\cc A$).

Given a symmetric monoidal category of correspondences $\cc A$,
a theorem of Day~\cite{Day} implies that $\bb M^{\cc A}$ is a closed symmetric
monoidal category with a tensor product defined as
   $$X\odot Y=\int^{(U,V)\in\cc A\times\cc A}X(U)\otimes Y(V)\otimes\cc A(-,U\times V).$$
As an example, $\cc A(-,U)\odot\cc A(-,V)=\cc A(-,U\times V)$. 
The monoidal unit equals $\cc A(-,pt)$ with $pt=\spec k$. Similar to~\cite[Example~3.4]{DRO1} $H\cc A$
is a commutative motivic symmetric ring spectrum.

Suppose $\cc A$ is a symmetric monoidal category of correspondences. Repeating the proof of~\cite[Lemma~10]{RO}
word for word, the projective motivic model structure on $\bb M^{\cc A}_{\mot}$ is symmetric monoidal.
Following~\cite{Hov,RO} one can define the stable monoidal model
category of symmetric $T$-spectra $\bb{MSS}^{\cc A}$ associated to
$\bb M^{\cc A}_{\mot}$ (with projective model structure). The
homotopy category of $\bb{MSS}^{\cc A}$ is a model for $D^{st}_{\bb
A^1}(Sh(\cc A))(k)$. It is as well a model for $DM_{\cc A}(k)$
whenever $\cc A$ is a strict $V$-category of correspondences (for
this repeat the proof of~\cite[Theorem~11]{RO} literally).

Below we shall need the following theorem proved by Riou in~\cite[Appendix~B]{LYZ}
(see the proof of~\cite[5.8]{HKO} as well).

\begin{thm}[Riou]\label{riou}
Let $k$ be a perfect field. Let $p$ denote the caracteristic exponent of $k$ (i.e.,
$p > 0$ or $p = 1$ if the characteristic of $k$ is zero). Then, for any smooth finite type $k$-scheme
$U$, the suspension spectrum $\Sigma^\infty_T U_+$ is strongly dualisable in $SH(k)[1/p]$.
\end{thm}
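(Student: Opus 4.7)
The plan is to reduce the claim to the case where $U$ is smooth and projective, since for such $U$ strong dualisability of $\Sigma^\infty_T U_+$ in $SH(k)$ is classical: the Atiyah/Poincar\'e duality isomorphism identifies the Spanier--Whitehead dual of $U_+$ with the Thom spectrum of $-T_U$. Strongly dualisable objects form a thick subcategory of $SH(k)[1/p]$ closed under retracts and finite (co)limits, so it suffices to build $\Sigma^\infty_T U_+$, up to retracts, out of suspension spectra of smooth projective varieties together with Thom spectra of vector bundles on them.

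In characteristic zero one has $p=1$, hence $SH(k)[1/p]=SH(k)$, and Hironaka's resolution of singularities applies. I would take a smooth compactification $\bar U\supset U$ whose complement $D$ is a strict normal crossings divisor, then induct on the number of irreducible components of $D$ via the Gysin/localisation cofiber sequence associated with an open--closed decomposition $(\bar U\setminus D_1,\bar U,D_1)$. At each inductive stage what is left to control is a smooth projective scheme and the Thom spectrum of its normal bundle inside a larger smooth projective scheme, both of which are strongly dualisable; this reduces the claim for $U$ to the projective case.

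In positive characteristic $p>0$ the main obstacle is the absence of resolution of singularities, and this is where I expect the real work to be. The natural substitute is de Jong's theorem on alterations (and Gabber's refinement), which yields a proper surjective, generically finite map $\tilde U\to\bar U$ from a smooth projective $\tilde U$ whose generic degree is a power of $p$. After inverting $p$ this degree becomes a unit, and the strategy is to construct a Becker--Gottlieb-type transfer in $SH(k)[1/p]$ which splits the pullback along such an alteration. Granted this transfer, a cohomological-descent argument (in the spirit of $cdh$- or $h$-descent) propagates strong dualisability from $\tilde U$ down to $\bar U$, and the Hironaka-style boundary induction then yields it for $U$. The crux is thus the construction of motivic transfers for alterations of $p$-power degree and the verification that they split the unit map in $SH(k)[1/p]$; this is precisely what forces the inversion of $p$ throughout and is the content of Riou's argument in~\cite[Appendix~B]{LYZ}.
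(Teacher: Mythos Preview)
The paper does not give its own proof of this theorem: it merely quotes the result and cites Riou's argument in~\cite[Appendix~B]{LYZ} (with a pointer to the proof of~\cite[5.8]{HKO}). So there is nothing in the paper to compare your proposal against beyond the reference.

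That said, your sketch is a faithful outline of Riou's actual strategy. The reduction to smooth projective varieties via thick-subcategory arguments, the use of resolution and Gysin/localisation sequences in characteristic zero, and the replacement of resolution by Gabber's prime-to-$p$ alterations together with a transfer/trace splitting after inverting $p$ in positive characteristic are exactly the ingredients in~\cite[Appendix~B]{LYZ}. One small sharpening: Gabber's refinement produces alterations of degree \emph{prime to} $p$ (not necessarily a $p$-power), which is what makes the transfer a retraction in $SH(k)[1/p]$ rather than only after inverting more primes; you may want to adjust that wording. Otherwise your proposal matches the cited proof.
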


We are now in a position to prove the R\"ondigs--{\O}stv{\ae}r
theorem for $\cc A$-correspondences. Notice that in all known
examples a $V$-category of correspondences is strict whenever the
base field $k$ is (infinite) perfect (of characteristic not 2 if
$\cc A=\wt{\textrm{Cor}}$). We also recall the reader that the
category $SH(k)[p^{-1}]$ is defined on page~\pageref{bobrovsky}. It
is the homotopy category of the stable model category of motivic
functors with weak equivalences being $p^{-1}$-stable motivic
equivalences.

\begin{thm}[R\"ondigs--{\O}stv{\ae}r]\label{rondigsostvaer}
If $k$ is a perfect field of exponential characteristic $p$ and $\cc
A$ is a symmetric monoidal category of correspondences, then the
homotopy category of $\Mod H\cc A_{\bb Z[\frac 1p]}$ (respectively
$\Mod H\cc A_{\bb Q}$) is equivalent to $D^{st}_{\bb A^1}(Sh(\cc
A))(k)[\frac 1p]$ (respectively $D^{st}_{\bb A^1}(Sh(\cc
A))(k)\otimes\bb Q$). The equivalence preserves the triangulated
structure. In particular, $\Ho(\Mod H\cc A_{\bb Z[\frac 1p]})$
(respectively $\Ho(\Mod H\cc A_{\bb Q})$) is equivalent to $DM_{\cc
A}(k)[\frac 1p]$ (respectively $DM_{\cc A}(k)\otimes\bb Q$) if $\cc
A$ is a symmetric monoidal strict $V$-category of correspondences.
\end{thm}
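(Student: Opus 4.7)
The plan is to follow the strategy of \cite[Theorems 1 and 11]{RO}, replacing finite correspondences $\corr$ by an arbitrary symmetric monoidal category of correspondences $\cc A$ and invoking Riou's Theorem~\ref{riou} in place of the specific cohomological inputs that were available only for $\corr$.

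First I would promote the Quillen adjunction $(\bb Z^{\cc A},\cc U):\bb M_{\mot}\rightleftarrows\bb M^{\cc A}_{\mot}$ furnished by Lemma~\ref{roenostv} to the stable level, obtaining a Quillen adjunction between $\bb{MSS}$ and $\bb{MSS}^{\cc A}$. By the very construction of $H\cc A$, the monad $\cc U\circ\bb Z^{\cc A}$ on $\bb{MSS}$ is canonically isomorphic to $H\cc A\wedge(-)$, so a monadic rectification argument modelled on \cite[\S4]{RO} identifies $\Ho(\bb{MSS}^{\cc A})$ with $\Ho(\Mod H\cc A)$. On the other hand, unravelling the $T$-stabilisation directly shows that $\Ho(\bb{MSS}^{\cc A})=D^{st}_{\bb A^1}(Sh(\cc A))(k)$, and this category is canonically equivalent to $DM_{\cc A}(k)$ when $\cc A$ is a strict $V$-category of correspondences, exactly as in the proof of \cite[Theorem~11]{RO}. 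To handle coefficients in $R=\bb Z[1/p]$ or $R=\bb Q$, one replaces $\cc A$ by $\cc A\otimes R$, which is still a symmetric monoidal category of correspondences, thereby reducing the coefficient version to the previous integral statement for the $R$-linear theory.

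The main obstacle is showing that the rectification step is compatible with the $p^{-1}$-stable (respectively $\bb Q$-stable) model structure; equivalently, that the comparison functor $\Ho(\bb{MSS}^{\cc A})\to\Ho(\Mod H\cc A)$ remains an equivalence after inverting $p$. Here Riou's Theorem~\ref{riou} is the essential input: after inverting $p$, every suspension spectrum $\Sigma^{\infty}_T U_+$ of a smooth scheme $U$ is strongly dualisable in $SH(k)[1/p]$, so it suffices to verify the comparison on the set of compact generators $\{\Sigma^{\infty}_T U_+\mid U\in Sm/k\}$. On these generators, the map reduces by construction of $H\cc A$ to the tautological isomorphism $H\cc A\wedge U_+\simeq\cc U(\bb Z^{\cc A}(U_+))$, and the usual compact-generation argument then propagates the equivalence to the whole category. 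Finally, the triangulated enhancement is automatic, since both sides are homotopy categories of stable model categories and the equivalence is induced by a Quillen adjunction, hence preserves distinguished triangles.
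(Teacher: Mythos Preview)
Your proposal contains a genuine gap at the step you call ``tautological.'' The map you write as $H\cc A\wedge U_+\simeq\cc U(\bb Z^{\cc A}(U_+))$ is \emph{not} an isomorphism on the nose: it is precisely the unit of the adjunction, and at the level of spectra it is the assembly map
\[
H\cc A\wedge U_+\longrightarrow H\cc A\circ(-\wedge U_+)
\]
from the smash product to the composition of motivic functors in the sense of~\cite{DRO1}. Levelwise the target is $(\cc A(-,U),(U_+\wedge T)^{\cc A},(U_+\wedge T^{\wedge 2})^{\cc A},\ldots)$ while the source is $(\cc U(pt_+)\wedge U_+,\cc U(T^{\cc A})\wedge U_+,\ldots)$; these are not literally the same, and proving the map is a stable weak equivalence is the entire content of the argument. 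Relatedly, the monad $\cc U\circ\bb Z^{\cc A}$ on $\bb{MSS}$ is \emph{not} $H\cc A\wedge(-)$; it is $H\cc A\circ(-)$ in the motivic-functor sense, and comparing the two is again the assembly problem.

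You have also misplaced the role of Riou's Theorem~\ref{riou}. Strong dualisability of $\Sigma^\infty_T U_+$ in $SH(k)[1/p]$ is not used to ``reduce to compact generators'' (that reduction is the easy compact-generation step, cf.~\cite[Lemma~43]{RO}); rather, dualisability is the hypothesis that makes the assembly map a weak equivalence via~\cite[Corollary~56]{RO}. The paper's proof runs as follows: one checks, using Lemma~\ref{roenostv}, that the motivic functor $H\cc A$ preserves motivic weak equivalences of cofibrant finitely presentable motivic spaces; then, since $U_+$ is dualisable in $SH(k)[1/p]$ by Riou, \cite[Corollary~56]{RO} gives that the assembly map $H\cc A_{\bb Z[1/p]}\wedge U_+\to H\cc A_{\bb Z[1/p]}\circ(-\wedge U_+)$ is a $p^{-1}$-stable weak equivalence; a short argument with homotopy sheaves upgrades this to an ordinary stable motivic equivalence; and evaluating at spheres identifies this map with the unit~\eqref{adjunit}. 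Your sketch needs to insert exactly this assembly-map argument where you currently have ``tautological isomorphism.''
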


\begin{proof}
We verify the theorem for categories with $\bb
Z[\frac1p]$-coefficients, because the proof of the statement for
categories with rational coefficients repeats that for $\bb
Z[\frac1p]$-coefficients {\it word for word}. The proof of the
theorem for categories with $\bb Z[\frac1p]$-coefficients is the
same with the original R\"ondigs--{\O}stv{\ae}r's theorem~\cite{RO}.
The only difference is that we shall have to deal somewhere with
$p^{-1}$-stable weak equivalences of motivic functors instead of
ordinary stable weak equivalences.

We must show that the canonical pair of adjoint (triangulated) functors
   $$\Phi:\Mod H\cc A_{\bb Z[\frac 1p]}\rightleftarrows\bb{MSS}^{\cc A_{\bb Z[\frac 1p]}}:\Psi$$
is a Quillen equivalence ($\Psi$ forgets correspondences).

Similar to~\cite[Lemma~43]{RO} it suffices to prove that the unit of the adjunction
   \begin{equation}\label{adjunit}
    H\cc A_{\bb Z[\frac 1p]}\wedge U_+\to\Psi\Phi(H\cc A_{\bb Z[\frac 1p]}\wedge U_+)
   \end{equation}
is a stable motivic weak equivalence of motivic symmetric spectra for every smooth scheme $U$.
Note that $\Psi\Phi(H\cc A_{\bb Z[\frac 1p]}\wedge U_+)$ is the symmetric spectrum
$(\cc A(-,U)\otimes\bb Z[\frac 1p],(U_+\wedge T)^{\cc A\otimes\bb Z[\frac 1p]},
(U_+\wedge T^{\wedge2})^{\cc A\otimes\bb Z[\frac 1p]},\ldots)$.

By Theorem~\ref{riou}, $U_+$ is dualizable in $SH(k)[p^{-1}]$ for
every $k$-smooth scheme $U$. Suppose $X$ is a motivic functor in the
sense of~\cite{DRO1} and $B$ is a cofibrant finitely presentable
motivic space such that $-\wedge B$ is dualizable in
$SH(k)[p^{-1}]$. When $X$ preserves motivic weak equivalences of
cofibrant finitely presentable motivic spaces, then the evaluation
of the assembly map
   $$X\wedge B\to X\circ(-\wedge B)$$
is a $p^{-1}$-stable weak equivalence between motivic symmetric spectra by~\cite[Corollary~56]{RO}
(though~\cite[Corollary~56]{RO} is proved within an ordinary stable motivic model structure of
motivic functors, it is also true within the $p^{-1}$-stable model structure). We use here
notation and terminology of~\cite{DRO1}. Recall that motivic functors give a model for
motivic symmetric spectra, and hence for $SH(k)$~\cite{DRO1}.

Consider a motivic functor associated with $H\cc A$ (denoted by the same letters)
   $$H\cc A:c\bb M\hookrightarrow\bb M\xrightarrow{\bb Z^{\cc A}}\bb M^{\cc A}\xrightarrow{\cc U}\bb M.$$
Here $c\bb M$ is the full subcategory of $\bb M$ of cofibrant
finitely presentable motivic spaces. $\bb Z^{\cc A}:\bb
M_{\mot}\to\bb M^{\cc A}_{\mot}$ is a left Quillen functor, hence it
preserves motivic weak equivalences between cofibrant motivic
spaces. By Lemma~\ref{roenostv}, $\cc U$ preserves weak equivalences
in $\bb M_{\mot}^{\cc A}$. It follows that $H\cc A$ preserves
motivic equivalences of cofibrant finitely presentable motivic
spaces. Hence,
   \begin{equation*}\label{mnogoo}
    H\cc A\wedge U_+\to H\cc A\circ(-\wedge U_+)
   \end{equation*}
is a $p^{-1}$-stable weak equivalence between motivic symmetric spectra
by~\cite[Corollary~56]{RO}. Similarly,
   \begin{equation}\label{mnogo}
    H\cc A_{\bb Z[\frac 1p]}\wedge U_+\to H\cc A_{\bb Z[\frac 1p]}\circ(-\wedge U_+)
   \end{equation}
is a $p^{-1}$-stable weak equivalence between motivic symmetric spectra.
Obviously,
   $$\underline{\pi}_{*,*}^{\bb A^1}(H\cc A_{\bb Z[\frac 1p]}\circ(-\wedge U_+))\cong
     \underline{\pi}^{\bb A^1}_{*,*}(H\cc A_{\bb Z[\frac 1p]}\circ(-\wedge U_+))\otimes\bb Z[1/p].$$
This is because fibrant replacements of the $T$-spectrum $H\cc
A_{\bb Z[\frac 1p]}\circ(\bb S_k\wedge U_+)$, where $\bb S_k$ is the
motivic sphere spectrum, can be computed in $\bb{MSS}^{\cc A_{\bb
Z[\frac 1p]}}$.

We claim that
   $$\underline{\pi}_{*,*}^{\bb A^1}(H\cc A_{\bb Z[\frac 1p]}\wedge U_+)\cong
       \underline{\pi}^{\bb A^1}_{*,*}(H\cc A_{\bb Z[\frac 1p]}\wedge U_+)\otimes\bb Z[1/p].$$
Indeed, this follows from an isomorphism in $SH(k)$
   \begin{gather*}
    H\cc A_{\bb Z[\frac 1p]}\wedge U_+\cong\hocolim(H\cc A\lra pH\cc A\lra p H\cc A\lra p\cdots)\wedge U_+\\
    \cong\hocolim(H\cc A\wedge U_+\lra pH\cc A\wedge U_+\lra p\cdots).
   \end{gather*}
We see that~\eqref{mnogo} is not only a $p^{-1}$-stable weak equivalence between motivic symmetric spectra,
but also an ordinary stable motivic equivalence.

Ordinary symmetric $T$-spectra
are obtained from motivic spaces by evaluating them at spheres $S^0,T,T^{\wedge 2},\ldots$ (see~\cite[\S3.2]{DRO1}).
The evaluation of the motivic space $H\cc A_{\bb Z[\frac 1p]}\wedge U_+$ is the symmetric $T$-spectrum
   $$(\cc U(pt_+),\cc U(T^{\cc A_{\bb Z[\frac 1p]}}),\cc U((T^{\wedge2})^{\cc A_{\bb Z[\frac 1p]}}),\ldots)\wedge U_+.$$
The evaluation of the motivic space $H\cc A_{\bb Z[\frac 1p]}\circ(-\wedge U_+)$ is the symmetric $T$-spectrum
   $$\Phi\Psi(H\cc A_{\bb Z[\frac 1p]}\wedge U_+)=(\cc U(\cc A_{\bb Z[\frac 1p]}(-,U)),\cc U((U_+\wedge T)^{\cc A_{\bb Z[\frac 1p]}}),         \cc U((U_+\wedge T^{\wedge2})^{\cc A_{\bb Z[\frac 1p]}}),\ldots).$$
Furthermore, the evaluation of the morphism~\eqref{mnogo} is the morphism~\eqref{adjunit}. We see that
the morphism~\eqref{adjunit} is a stable motivic equivalence of motivic symmetric spectra, as was to be shown.
\end{proof}

\begin{rem}{\rm
Very recently Elmanto and Kolderup~\cite{EK} have suggested another approach to the R\"ondigs--{\O}stv{\ae}r theorem
for $\cc A=\wt{\textrm{Cor}}$ that uses Lurie's $\infty$-categorical version of the Barr--Beck theorem.
}\end{rem}

\begin{thm}[Reconstruction]\label{reconstrshq}
If $k$ is an infinite perfect field with $\chr k\not=2$, then $SH(k)_{\bb Q}$ is equivalent to $DM_{MW}(k)_{\bb Q}$. The
equivalence preserves the triangulated structure.
\end{thm}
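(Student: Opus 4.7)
The plan is to combine the two main theorems already established in the paper, namely Theorem~\ref{ratsphcw} (identifying $\bb S \otimes \bb Q$ with $\bb S^{MW}\otimes\bb Q$) and the generalised R\"ondigs--{\O}stv{\ae}r Theorem~\ref{rondigsostvaer} (identifying modules over the generalised Eilenberg--Mac~Lane spectrum $H\corrt_{\bb Q}$ with $DM_{MW}(k)_{\bb Q}$), and then glue them together via a standard Schwede--Shipley style identification of $SH(k)_{\bb Q}$ with modules over the rational sphere spectrum.

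First I would identify the additive Milnor--Witt sphere spectrum $\bb S^{MW}$ with the generalised Eilenberg--Mac~Lane ring spectrum $H\corrt$ associated with $\corrt$: both are assembled from the Nisnevich sheaves $\corrt(-,\bb G_m^{\wedge n})_{\nis}$ with the same structure maps (compare the construction of $\bb S^{\cc A}$ on page~\pageref{formone} with the definition of $H\cc A$ in Section~4). Under the standing hypotheses ($k$ infinite perfect, $\chr k\neq 2$), the category $\corrt$ is a symmetric monoidal strict $V$-category of correspondences (by~\cite{CF,DF,FO}), so Theorem~\ref{rondigsostvaer} applies and yields a triangulated equivalence
$$\Ho(\Mod H\corrt_{\bb Q})\simeq DM_{MW}(k)_{\bb Q}.$$

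Next I would invoke Theorem~\ref{ratsphcw}, which tells us that the canonical morphism $\bb S\otimes\bb Q\to\bb S^{MW}\otimes\bb Q$ is an isomorphism in $SH(k)$. Combined with the previous identification, this produces an equivalence of commutative ring spectra $\bb S_{\bb Q}\simeq H\corrt_{\bb Q}$. By the standard equivalence $SH(k)_{\bb Q}\simeq\Ho(\Mod\bb S_{\bb Q})$ (modules over the rational sphere in motivic symmetric spectra), we obtain
$$SH(k)_{\bb Q}\simeq\Ho(\Mod\bb S_{\bb Q})\simeq\Ho(\Mod H\corrt_{\bb Q})\simeq DM_{MW}(k)_{\bb Q},$$
and each arrow preserves the triangulated structure. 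Equivalently, and perhaps more cleanly, the canonical triangulated functor $F\otimes\bb Q\colon SH(k)_{\bb Q}\to D^{st}_{\bb A^1}(Sh(\corrt))_{\bb Q}\simeq DM_{MW}(k)_{\bb Q}$ sends the compact generator $\bb S_{\bb Q}$ to $\bb S^{MW}_{\bb Q}\simeq\bb S_{\bb Q}$ (by Theorem~\ref{ratsphcw}) and is fully faithful on the generator; being a left adjoint between compactly generated triangulated categories that matches generators and is fully faithful on them, $F\otimes\bb Q$ is then an equivalence.

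The main obstacle I expect is the compatibility of the equivalence $\bb S_{\bb Q}\simeq\bb S^{MW}_{\bb Q}=H\corrt_{\bb Q}$ of Theorem~\ref{ratsphcw} with ring spectrum structures, so that it induces an equivalence of module categories. One needs to check that the comparison map $\bb S\to\bb S^{MW}$ coming from the left adjoint $\Phi$ (in the notation of the proof of Theorem~\ref{rondigsostvaer}) is a morphism of commutative (or at least $E_\infty$) ring spectra, which should follow from the fact that the graph functor $Sm/k\to\corrt$ is symmetric monoidal and that $\bb Z^{\corrt}$ is a strong symmetric monoidal left Quillen functor. Once this is in place, the rest of the argument is essentially formal: the module categories coincide, and Theorem~\ref{rondigsostvaer} converts the $H\corrt_{\bb Q}$-module picture into the triangulated category $DM_{MW}(k)_{\bb Q}$.
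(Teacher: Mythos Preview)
Your main argument (the chain $SH(k)_{\bb Q}\simeq\Ho(\Mod\bb S_{\bb Q})\simeq\Ho(\Mod H\corrt_{\bb Q})\simeq DM_{MW}(k)_{\bb Q}$ via Theorem~\ref{ratsphcw}, Schwede--Shipley, and Theorem~\ref{rondigsostvaer}) is correct and is essentially the paper's approach. The paper takes a slightly longer route: it first invokes Morel's equivalence $SH(k)_{\bb Q}\simeq D_{\bb A^1}(k)_{\bb Q}$, then applies Theorem~\ref{rondigsostvaer} to $\cc A_{naive}$ to identify $D_{\bb A^1}(k)_{\bb Q}$ with $\Ho(\Mod\bb S^{\cc A_{naive}}\otimes\bb Q)$, and then uses~\cite[4.3]{SS1} twice to transport along the weak equivalences $\bb S^{\cc A_{naive}}\otimes\bb Q\simeq\bb S\otimes\bb Q\simeq\bb S^{MW}\otimes\bb Q$ of commutative monoids before applying Theorem~\ref{rondigsostvaer} a second time with $\cc A=\corrt$. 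Your worry about the ring-spectrum compatibility is exactly what the citation of~\cite[4.3]{SS1} handles: the comparison maps are unit maps of commutative monoid spectra, so no separate $E_\infty$ verification is needed.

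However, your ``equivalently, and perhaps more cleanly'' alternative has a genuine gap: $\bb S_{\bb Q}$ is \emph{not} a compact generator of $SH(k)_{\bb Q}$. Unlike the classical stable homotopy category, $SH(k)_{\bb Q}$ is generated by the family $\{\Sigma^\infty_T X_+\otimes\bb Q\}_{X\in Sm/k}$, and the sphere alone does not suffice. So knowing that $F\otimes\bb Q$ is fully faithful on $\bb S_{\bb Q}$ does not by itself imply that it is an equivalence; you would need full faithfulness on \emph{all} of the $\Sigma^\infty_T X_+\otimes\bb Q$, which is precisely what the module-category argument (or the paper's argument) delivers. Drop that paragraph and your proof stands.
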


\begin{proof}
$SH(k)_{\bb Q}$ is equivalent to $D_{\bb A^1}(k)_{\bb Q}$ (see~\cite{Mor0}). By Theorem~\ref{rondigsostvaer}
the latter is equivalent to the homotopy category of $\bb S^{\cc A_{naive}}\otimes\bb Q$-modules. $\bb S^{\cc A_{naive}}\otimes\bb Q$
is motivically equivalent to the commutative monoid spectrum $\bb S\otimes\bb Q$. By~\cite[4.3]{SS1}
the homotopy category of $\bb S^{\cc A_{naive}}\otimes\bb Q$-modules is equivalent to the homotopy
category of $\bb S\otimes\bb Q$-modules.
By Theorem~\ref{ratsphcw} $\bb S\otimes\bb Q$
is motivically equivalent to the commutative monoid spectrum $\bb S^{MW}\otimes\bb Q$. By~\cite[4.3]{SS1}
the homotopy category of $\bb S\otimes\bb Q$-modules is equivalent to the homotopy category of
$\bb S^{MW}\otimes\bb Q$-modules. Wee see that
$SH(k)_{\bb Q}$ is equivalent to the homotopy category of $\bb S^{MW}\otimes\bb Q$-modules.
By Theorem~\ref{rondigsostvaer} the latter category is triangle equivalent to $DM_{MW}(k)_{\bb Q}$, as was to be shown.
\end{proof}

\begin{rem}{\rm
The triangulated equivalence of Theorem~\ref{reconstrshq} is in fact symmetric monoidal.
The main point here is that the natural functor between categories of correspondences
$\cc A_{naive}\to\wt{\textrm{Cor}}$ is extended to a symmetric monoidal triangulated functor 
$D_{\bb A^1}(k)\to DM_{MW}(k)$ (see~\cite[Section~3.3]{DF} as well).
Consider a commutative diagram of natural triangulated functors
   $$\xymatrix{DM_{MW}(k)_{\bb Q}\ar[rr]\ar[d]&&D_{\bb A^1}(k)_{\bb Q}\ar[d]\\
                        \Ho(\Mod-\bb S^{MW}\otimes\bb Q)\ar[rr]&& \Ho(\Mod-\bb S^{naive}\otimes\bb Q)}$$
The proof of the preceding theorem implies that the lower and the vertical functors are equivalences.
We see that the upper functor is an equivalence. It is right adjoint to the functor
$D_{\bb A^1}(k)_{\bb Q}\to DM_{MW}(k)_{\bb Q}$. It follows that the latter functor is an equivalence, too.
It is plainly symmetric monoidal.

}\end{rem}

\section{Comparing motivic complexes with framed and $MW$-correspondences}

In this section we apply the Reconstruction Theorem~\ref{reconstrshq} to compare rational motives with
framed and Milnor--Witt correspondences respectively. Throughout this section the base field $k$ is
infinite perfect of characteristic different from 2.

It is shown in~\cite{GP3} that the suspension bispectrum $\Sigma^{\infty}_{S^1}\Sigma^{\infty}_{\bb G}X_+\in SH(k)$
of a $k$-smooth algebraic variety $X$ is stably equivalent to the bispectrum
      $$M_{fr}^{\bb G}(X)=(M_{fr}(X),M_{fr}(X\times\bb G_m^{\wedge 1}),M_{fr}(X\times\bb G_m^{\wedge 2}),\ldots),$$
each term of which is a twisted framed motive of $X$. Since the functor $\wt{\bb Z}:\cc X\mapsto\wt{\bb Z}[\cc X]$ respects
stable weak equivalences of bispectra, it follows that bispectrum
$\wt{\bb Z}[\Sigma^{\infty}_{S^1}\Sigma^{\infty}_{\bb G}X_+]\in SH(k)$
is stably equivalent to the bispectrum
      $$\bb ZM_{fr}^{\bb G}(X)=(\bb ZM_{fr}(X),\bb ZM_{fr}(X\times\bb G_m^{\wedge 1}),
           \bb ZM_{fr}(X\times\bb G_m^{\wedge 2}),\ldots).$$
By~\cite[1.2]{GPN} the latter bispectrum is stably equivalent to the bispectrum
      $$LM_{fr}^{\bb G}(X):=(LM_{fr}(X),LM_{fr}(X\times\bb G_m^{\wedge 1}),
           LM_{fr}(X\times\bb G_m^{\wedge 2}),\ldots)$$
consisting of twisted linear framed motives in the sense
of~\cite{GP3}. If we take a levelwise Nisnevich local fibrant
replacement of $LM_{fr}(X\times\bb G_m^{\wedge n})$, we get a
bispectrum
   $$LM_{fr}^{\bb G}(X)_f:=(LM_{fr}(X)_f,LM_{fr}(X\times\bb G_m^{\wedge 1})_f,LM_{fr}(X\times\bb G_m^{\wedge 2})_f,\ldots),$$
where each $LM_{fr}(X\times\bb G_m^{\wedge n})_f$ is a Nisnevich
local fibrant replacement of the $S^1$-spectrum $LM_{fr}(X\times\bb
G_m^{\wedge n})$. It follows from the Cancellation Theorem for
linear framed motives~\cite{AGP} that $LM_{fr}^{\bb G}(X)_f$ is a
motivically fibrant bispectrum. In particular, $(\bb S\wedge
X_+)\otimes\bb Q$ is computed locally in the Nisnevich topology as
the bispectrum
   $$LM_{fr}^{\bb G_m}(X)\otimes\bb Q=(LM_{fr}(X)\otimes\bb Q,LM_{fr}(X\times\bb G_m^{\wedge1})\otimes\bb Q,\ldots)$$
consisting of twisted rational linear framed motives of $X$. Each
$S^1$-spectrum $LM_{fr}(X\times\bb G_m^{\wedge n})$ is the
Eilenberg--Mac~Lane spectrum associated with the simplicial
Nisnevich sheaf $\bb ZF(-\times\Delta^\bullet,X\times\bb G_m^{\wedge
n})$ defined in terms of the category of linear framed
correspondences $\bb ZF_*(k)$ and stabilised in the
$\sigma$-direction (see~\cite{GP3} for details).

It is natural to compare twisted complexes defined by various categories of correspondences. There is
constructed a functor in~\cite{DF}
   $$F:Fr_*(k)\to\corrt.$$
It induces morphisms of twisted complexes
   $$f_n:\bb ZF(-\times\Delta^\bullet,X\times\bb G_m^{\wedge n})\to
       \corrt(-\times\Delta^\bullet,X\times\bb G_m^{\wedge n})_{\nis},\quad n\geq 0.$$

A question, originally due to Calm\`es and Fasel, is whether the
$f_n$-s are quasi-isomorphisms of complexes of Nisnevich sheaves.
The following theorem answers this question in the affirmative with
rational coefficients.

\begin{thm}[Comparison]\label{complexesq}
Given an infinite perfect field of characteristic not 2 and a
$k$-smooth scheme $X$, each morphism of complexes of Nisnevich
sheaves
   $$f_n:\bb ZF(-\times\Delta^\bullet,X\times\bb G_m^{\wedge n})\otimes\bb Q\to
        \corrt(-\times\Delta^\bullet,X\times\bb G_m^{\wedge n})_{\nis}\otimes\bb Q,\quad n\geq 0,$$
is a quasi-isomorphism.
\end{thm}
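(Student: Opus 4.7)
The plan is to lift the quasi-isomorphism question from the level of complexes to the level of motivic bispectra, where Theorem~\ref{ratsphcw} and the Reconstruction Theorem become available. First I would assemble the complexes $\bb ZF(-\times\Delta^\bullet,X\times\bb G_m^{\wedge n})\otimes\bb Q$, for varying $n\geq 0$, into the bispectrum $LM_{fr}^{\bb G}(X)\otimes\bb Q$ recalled just before the statement of the theorem, and likewise assemble the complexes $\corrt(-\times\Delta^\bullet,X\times\bb G_m^{\wedge n})_{\nis}\otimes\bb Q$ into the bispectrum $M_{MW}^{\bb G_m}(X)\otimes\bb Q$ together with its Nisnevich local fibrant replacement $M_{MW}^{\bb G_m}(X)_f\otimes\bb Q$. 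The individual $f_n$ are induced by the functor $F:Fr_*(k)\to\corrt$ of \cite{DF}, so they promote to a morphism of rational $(S^1,\bb G_m)$-bispectra.

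Next I identify the source and target of this bispectrum morphism in $SH(k)_{\bb Q}$. By the results of~\cite{GP3,GPN} and the Cancellation Theorem of~\cite{AGP} recalled above, $LM_{fr}^{\bb G}(X)_f\otimes\bb Q$ is motivically fibrant and represents $(\bb S\wedge X_+)\otimes\bb Q$ in $SH(k)$. On the MW-side, $\corrt$ is a strict $V$-category of correspondences satisfying the cancellation property by~\cite{FO}, so Lemma~\ref{brus} shows that $M_{MW}^{\bb G_m}(X)_f\otimes\bb Q$ is motivically fibrant, and Corollary~\ref{bruscor} together with Theorem~\ref{rondigsostvaer} identifies it with $(\bb S^{MW}\wedge X_+)\otimes\bb Q$. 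Under these identifications, the bispectrum morphism coming from $F$ corresponds in $SH(k)$ to the canonical morphism $(\bb S\otimes\bb Q)\wedge X_+\to(\bb S^{MW}\otimes\bb Q)\wedge X_+$.

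By Theorem~\ref{ratsphcw}, the unit morphism $\bb S\otimes\bb Q\to\bb S^{MW}\otimes\bb Q$ is an isomorphism in $SH(k)$, and smashing with $X_+$ preserves this. Hence the induced morphism of rational bispectra is a stable motivic weak equivalence. Since both bispectra (after rationalisation and Nisnevich local fibrant replacement) are motivically fibrant, a stable motivic equivalence between them is already a levelwise Nisnevich local equivalence of the constituent $S^1$-spectra. Taking the associated simplicial sheaves and passing to the Dold--Kan normalisation at each level $n\geq 0$ yields precisely that $f_n$ is a quasi-isomorphism of complexes of Nisnevich sheaves with rational coefficients, as required.

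The main obstacle is the bookkeeping of the second paragraph: one must verify that the natural map of bispectra induced by the functor $F$ really does implement the canonical unit $\bb S\to\bb S^{MW}$ smashed with $X_+$ in $SH(k)_{\bb Q}$, under both the framed identification of~\cite{GP3,GPN,AGP} and the MW-identification provided by Theorem~\ref{rondigsostvaer}. Once this compatibility is in place the argument is essentially formal, reducing to the fact that a stable motivic weak equivalence between motivically fibrant bispectra is levelwise, which is what converts the bispectrum-level statement into a quasi-isomorphism at each twist $n$.
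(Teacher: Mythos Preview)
Your argument is correct and matches the paper's approach: both assemble the $f_n$ into a map between motivically fibrant bispectra and deduce the levelwise quasi-isomorphisms from a stable motivic equivalence in $SH(k)_{\bb Q}$. The only cosmetic difference is that the paper cites the Reconstruction Theorem~\ref{reconstrshq} directly, via the factorisation $\Sigma^\infty_{S^1}\Sigma^\infty_{\bb G}X_+\xrightarrow{\ell} LM_{fr}^{\bb G_m}(X)_f\xrightarrow{F} M_{MW}^{\bb G_m}(X)_f$, whereas you unpack it into Theorem~\ref{ratsphcw} plus an explicit identification of the target with $(\bb S^{MW}\wedge X_+)\otimes\bb Q$; the paper's formulation thereby absorbs the compatibility bookkeeping you flag as the main obstacle.
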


\begin{proof}
We defined the bispectrum $M_{MW}^{\bb G_m}(X)$ on
p.~\pageref{formone}. Taking levelwise Nisnevich local fibrant
replacements, we get a bispectrum $M_{MW}^{\bb G_m}(X)_f$. The
canonical morphism of bispectra
$\Sigma^{\infty}_{S^1}\Sigma^{\infty}_{\bb G}X_+\to M_{MW}(X)_f$
factors as
   $$\Sigma^{\infty}_{S^1}\Sigma^{\infty}_{\bb G}X_+\bl\ell\to LM_{fr}^{\bb G_m}(X)_f\bl F\to M_{MW}^{\bb G_m}(X)_f.$$
As we have shown above, the left arrow is rationally a stable
motivic equivalence. $F$ is a map between fibrant bispectra which
are both locally given by twisted complexes with linear framed and
finite Milnor--Witt correspondences respectively. It follows that
the morphisms of the corollary are quasi-isomorphisms if and only
$(F\circ\ell)\otimes\bb Q$ is an isomorphism in $SH(k)$. But the
latter follows from the Reconstruction Theorem~\ref{reconstrshq}.
\end{proof}

\begin{rem}{\rm
Bachmann and Ananyevskiy pointed out recently to the author that
Theorem~\ref{complexesq} cannot be true with integer coefficients
even for $X=pt$. Moreover, it is not true with $\bb
Z[\frac1{p_1},\ldots,\frac1{p_s}]$-coefficients for any finite
collection of primes $p_1,\ldots,p_s$. Therefore the
quasi-isomorphism of the Comparison Theorem is genuinely rational.
}\end{rem}

\section{Concluding remarks}

The methods developed in the previous sections should also be
applicable to compute $SH(k)_{\bb Q}$ in terms of the hypothetical
category of ``Hermitian correspondences" $K_0^h$. Its objects are
those of $Sm/k$ and morphisms are given by certain bimodules with
duality with/without coefficients in some line bundles. $K_0^h$ is
expected to be a symmetric monoidal strict $V$-category of
correspondences satisfying cancellation property. It is as well
expected that
   $$\bb S^{K_0^h}[2^{-1}]\cong\underline{W}_{\bb Z\left[\frac12\right]}^{\bb G_m}\oplus\bb S^{K_0^{\oplus}}[2^{-1}].$$
Suslin's theorem~\cite{Sus} together with~\cite[3.4]{ALP}
and~\cite[16.2.13]{CD} then would imply that $\bb S\otimes\bb Q$ is
isomorphic to $\bb S^{K_0^h}\otimes\bb Q$. The proof of the
Reconstruction Theorem~\ref{reconstrshq} then would be the same for
showing that $SH(k)_{\bb Q}$ is equivalent to $DM_{K_0^h}(k)_{\bb
Q}$.

The Suslin theorem~\cite{Sus} comparing Grayson's cohomology with
motivic cohomology is then extended to finite Milnor--Witt
correspondences as follows. It states that there is a natural
functor between categories of $V$-correspondences
   $$K_0^h\to\corrt$$
such that the induced morphisms of twisted complexes of Nisnevich sheaves
   $$K_0^h(-\times\Delta^\bullet,\bb G_m^{\wedge n})_{\nis}\to \corrt(-\times\Delta^\bullet,\bb G_m^{\wedge n})_{\nis}$$
is locally a quasi-isomorphism (at least over infinite perfect fields of characteristic not 2).
The extension of the Suslin theorem should be reduced to the original Suslin theorem.

We invite the interested reader to construct the category of ``Hermitian correspondences"
$K_0^h$ with the desired properties.

\end{document}